\numberwithin{equation}{section}
\numberwithin{equation}{section}
\DeclareMathOperator{\Hol}{Hol}
\DeclareMathOperator{\dist}{dist}
\DeclareMathOperator{\Span}{Span}
\newcommand{\CC}{\mathbb{C}}
\renewcommand{\phi}{\varphi}
\newcommand{\ima}{{\rm Im}\,}
\newcommand{\kl}{{\Bbbk_\lambda}}
\newtheorem{Thm}{Theorem}[section]
\newtheorem{theorem}[Thm]{Theorem}
\newtheorem{lemma}[Thm]{Lemma}
\newtheorem{claim}[Thm]{Claim}
\newtheorem{corollary}[Thm]{Corollary}
\begin{document}
\sloppy
\title[Summability properties of Gabor expansions]
{Summability properties of Gabor expansions}

\author{Anton Baranov, Yurii Belov, Alexander Borichev}
\address{Anton Baranov,
\newline Department of Mathematics and Mechanics, St.~Petersburg State University, St.~Petersburg, Russia,
\newline National Research University Higher School of Economics, St.~Petersburg, Russia,
\newline {\tt anton.d.baranov@gmail.com}
\smallskip
\newline \phantom{x}\,\, Yurii Belov,
\newline St.~Petersburg State University, St. Petersburg, Russia,
\newline {\tt j\_b\_juri\_belov@mail.ru}
\smallskip
\newline \phantom{x}\,\, Alexander Borichev,
\newline I2M, CNRS, Centrale Marseille, Aix-Marseille Universit\'e, 13453 Marseille, France, 
\newline {\tt alexander.borichev@math.cnrs.fr}
}

\thanks{The work was supported by Russian Science Foundation grant 14-41-00010.}

\keywords{Gabor systems, Fock spaces, complete and minimal systems, spectral synthesis}

\subjclass[2000]{Primary 46B15, Secondary 30C40, 30H20, 42C15, 46E22}

\begin{abstract} We show that there exist complete and minimal systems 
of time-frequency shifts of Gaussians 
in $L^2(\mathbb{R})$ which are not strong Markushevich basis 
(do not admit the spectral synthesis). In particular,
it implies that there is no linear summation method for general 
Gaussian Gabor expansions. On the other hand we prove that 
the spectral synthesis for such Gabor systems holds up to one dimensional defect. 
\end{abstract}

\maketitle

\section{Introduction and the main results}

Gabor analysis is an important part of the modern time-frequency analysis. 
It deals with the expansion of functions in $L^2(\mathbb{R}^n)$ 
in the series in the time frequency shifts of a given ``window'' $\varphi$,
$$
\tau_{x,y}\varphi(s):=e^{2\pi i <y,s>}\varphi(s-x),\qquad (x,y)\in\Lambda,
$$
$\Lambda$ being a discrete subset of $\mathbb{R}^n\times\mathbb{R}^n$.

For the (most frequently used) Gaussian window 
$\gamma(s)=e^{-\pi s^2}$, these expansions are 
closely related to the corresponding uniqueness, 
sampling and interpolation problems in the Fock space, see \cite{Gro,ls}. 
The case $n=1$ corresponds here to the classical Fock space 
of one complex variable. To study expansions of
$f\in L^2(\mathbb{R}^n)$ into the series 
with the respect to the system
$$
\mathcal{G}_\Lambda(\varphi):=\{\tau_{x,y}\varphi\}_{(x,y)\in\Lambda}
$$ 
we need, first of all, the completeness property. 
On the other hand, for these expansions to be unique we 
should require the minimality property of the system 
$\mathcal{G}_\Lambda$. In 1946 Gabor considered the system 
$\mathcal{G}_{\mathbb{Z}\times\mathbb{Z}\setminus\{(0,0)\}}(\gamma)$ 
and suggested that any $f\in L^2(\mathbb{R})$ expands into a series
$$
\sum_{(x,y)\in\mathbb{Z}\times\mathbb{Z}\setminus\{(0,0)\}}c_{x,y}
\tau_{x,y}\gamma
$$
with $\ell^2$ control on $c_{x,y}$. Unfortunately, such a system 
cannot be a Riesz basis and, moreover, no system 
$\mathcal{G}_\Lambda(\gamma)$ can 
be a Riesz basis in $L^2(\mathbb{R})$, see \cite{S2}.  However, there 
are many complete and minimal systems $\mathcal{G}_\Lambda(\gamma)$. 
In particular,  $\mathcal{G}_{\mathbb{Z}\times\mathbb{Z}\setminus\{(0,0)\}}(\gamma)$ 
satisfies this property, see also \cite{als} for other constructions.

Given a complete minimal system $\mathcal{G}_\Lambda(\gamma)$ there is the (unique) biorthogonal system 
$\mathcal{H}:=\{h_{x,y}\}_{(x,y)\in\Lambda}$, 
$(\tau_{x,y}\gamma, h_{x',y'})=\delta_{x,x'}\delta_{y,y'}$, $(x,y),(x',y')\in\Lambda$. In 2015 Belov proved that such biorthogonal system is always complete \cite{b1}. Therefore, we can associate to every $f\in L^2(\mathbb{R})$ its generalized Fourier series
\begin{equation}
f\sim\sum_{(x,y)\in\Lambda}(f, h_{x,y})\tau_{x,y}\gamma,
\label{FS}
\end{equation}
and the coefficients determine the function $f$ in a unique way.

It is well known that for any linear summation method to apply to the series \eqref{FS} it is necessary that
$$f\in\Span\{(f, h_{x,y})\tau_{x,y}\gamma\}_{(x,y)\in\Lambda}.$$
This latter property is called {\it the hereditary completeness} property of the system $(\tau_{x,y}\gamma)_{(x,y)\in\Lambda}$ 
or {\it the strong Markushevich basis} property 
(or the {\it spectral synthesis} property; see \cite{BBB2} and references therein).  
It is equivalent to the completeness of every {\it mixed system}
$$
\mathcal{G}_{\Lambda_1,\Lambda_2}:=\{\tau_{x,y}\gamma\}_{(x,y)\in\Lambda_2}
\cup\{h_{x,y}\}_{(x,y)\in\Lambda_1},
$$
where $\Lambda$ is the disjoint union of $\Lambda_1$ and $\Lambda_2$.

Answering a question posed in \cite{b1} we establish the following fact.

\begin{theorem}
There exists a complete and minimal Gaussian Gabor system which is not a strong Markushevich basis.
\label{T1}
\end{theorem}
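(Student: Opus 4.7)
My plan is to pass to the Bargmann--Fock space $\mathcal{F}$ and construct there a complete minimal system of reproducing kernels that is not hereditarily complete. Under the Bargmann transform $B\colon L^2(\mathbb{R})\to\mathcal{F}$, a time--frequency shift $\tau_{x,y}\gamma$ corresponds, up to a unimodular factor, to the reproducing kernel $k_\lambda$ of $\mathcal{F}$ at $\lambda=x-iy$, so it suffices to exhibit a discrete set $\Lambda\subset\mathbb{C}$ for which $\{k_\lambda\}_{\lambda\in\Lambda}$ is complete and minimal in $\mathcal{F}$ but not a strong Markushevich basis. I would work within the generating function framework: minimality of $\{k_\lambda\}_{\lambda\in\Lambda}$ is equivalent to the existence of an entire function $G$ of order at most $2$ with simple zeros exactly on $\Lambda$ such that $G(z)/(z-\lambda)\in\mathcal{F}$ for every $\lambda\in\Lambda$, and the biorthogonal system is then, up to normalisation, $g_\lambda(z)=G(z)/[(z-\lambda)G'(\lambda)]$.

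Failure of hereditary completeness is equivalent, by duality, to the existence of a non-zero $F\in\mathcal{F}$ and a partition $\Lambda=\Lambda_1\sqcup\Lambda_2$ with factorisation $G=G_1G_2$ such that $F$ vanishes on $\Lambda_2$, i.e.\ $F=G_2H$ for some entire $H$, and $(F,g_\lambda)=0$ for every $\lambda\in\Lambda_1$. Since $G(z)/(z-\lambda)=G_2(z)\cdot G_1(z)/(z-\lambda)$ is entire for $\lambda\in\Lambda_1$, this last family of conditions becomes an orthogonality constraint tying the ``divided'' function $H$ to $G_1$ in a weighted Fock-type inner product. I would then realise these constraints by taking $\Lambda$ as a carefully chosen perturbation of a unit density set such as $(\mathbb{Z}+i\mathbb{Z})\setminus\{0\}$, whose generating function is built from the Weierstrass sigma function of the lattice, splitting $\Lambda$ into pieces $\Lambda_1,\Lambda_2$ of comparable density so that $G_1,G_2$ have comparable order-$2$ growth, and constructing $H$ explicitly so that $G_2H\in\mathcal{F}$ and the orthogonality conditions hold. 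The philosophy is parallel to the non-synthesis constructions for reproducing-kernel systems in model and de Branges spaces carried out in \cite{BBB2}.

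The hard part will be this last step. On the one hand, membership of $G_2H$ in $\mathcal{F}$ forces the combined growth of $G_2$ and $H$ to be of order $2$ and type at most $\pi/2$; on the other hand, the orthogonality conditions essentially determine $H$ in terms of $G_1$, and one must simultaneously preserve completeness and minimality of the full system $\{k_\lambda\}_{\lambda\in\Lambda}$. These competing requirements translate into sharp two-sided entire-function estimates (reminiscent of, but genuinely different from, those used for model subspaces of $H^2$ because the Fock norm is not a boundary integral), and this entire-function balancing act is the technical heart of the argument.
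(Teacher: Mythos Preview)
Your translation to the Fock space and the generating-function/duality set-up are correct and match the paper. But what you have is an outline, not a proof, and one of your concrete design choices points the wrong way. You propose to split $\Lambda$ into $\Lambda_1,\Lambda_2$ of \emph{comparable density} so that $G_1,G_2$ have comparable order-$2$ growth. The paper does the opposite: $\Lambda_1=\{v_n\}_{n\ge 1}$ is a \emph{lacunary} real sequence (essentially $v_n\approx 2^{n-1}Q$), so $G_1$ is a zero-density canonical product of negligible growth, while $\Lambda_2$ carries all the density. This asymmetry is exactly what makes the orthogonality constraints $\langle g_{v_n},H\rangle_{\mathcal F}=0$ solvable: they become a linear system $\Delta\,\Xi=\Gamma$ in which $\Xi$ is a small perturbation of the identity, and one simply inverts it. With comparable densities that decoupling disappears and there is no visible mechanism for satisfying the constraints.

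There is also a structural simplification you miss. Rather than look for a single $F$ orthogonal to the whole mixed system, the paper constructs \emph{two} functions. First $F=\sigma_3+\sum_{n\ge 1} u_n^{-1/2}(\Bbbk_{u_n}-\Bbbk_{u_n+1})$ is written down explicitly; $\Lambda_2$ is then \emph{defined} to be (most of) its zero set, so $F\perp k_\lambda$ for $\lambda\in\Lambda_2$ holds by fiat. Second, $H=\sigma_3+\sum_{n\ge 1} d_n u_n^{-1/3}\Bbbk_{u_n}$ is tuned, via the lacunary system above, so that $H\perp g_\lambda$ for $\lambda\in\Lambda_1$, while both $F$ and $H$ are close enough to $\sigma_3$ that $\langle F,H\rangle_{\mathcal F}\ne 0$. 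By Hahn--Banach this yields incompleteness of the mixed system; separating the two families of constraints into two different functions is what makes the construction tractable. The ``sharp two-sided entire-function estimates'' you anticipate are, in the event, quite concrete: the standard bound $|\sigma(z)|\asymp\dist(z,\mathcal Z)e^{\pi|z|^2/2}$, the Fock-norm estimate $\|\sigma_0/(\cdot-w)\|\lesssim |w|^{-1}\log^{1/2}(1+|w|)$, and local mean-value arguments --- none of which your outline supplies.
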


Therefore, in general, there is no linear summation method for the Gaussian Gabor systems.

Next, one may ask what is the maximal size of the orthogonal complement to the system $\mathcal{G}_{\Lambda_1,\Lambda_2}$.

\begin{theorem}
Let $\mathcal{G}_\Lambda$ be a complete and minimal 
Gaussian Gabor system. For any partition $\Lambda=\Lambda_1
\cup\Lambda_2$, $\Lambda_1\cap\Lambda_2=\emptyset$, 
the orthogonal complement to the system 
$\mathcal{G}_{\Lambda_1,\Lambda_2}$ is at most one-dimensional.
\label{T2}
\end{theorem}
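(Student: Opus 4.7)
The plan is to pass via the Bargmann transform to the Fock space $\mathcal{F}$, factor out the generating function of $\Lambda_2$, and reduce Theorem \ref{T2} to a quantitative variant of Belov's theorem in a modified Fock-type space. The Bargmann transform $B \colon L^2(\mathbb{R}) \to \mathcal{F}$ is a unitary isomorphism sending $\gamma$ to $1$ and each $\tau_{x,y}\gamma$ to a unimodular multiple of the reproducing kernel $K_{x+iy}(z) = e^{\pi z(x-iy)}$ of $\mathcal{F}$; identifying $\Lambda$ with a subset of $\mathbb{C}$, the system $\mathcal{G}_\Lambda(\gamma)$ becomes $\{K_\lambda\}_{\lambda\in\Lambda}$ in $\mathcal{F}$ and the biorthogonal system takes the explicit form $h_\lambda(z) = G(z)/(G'(\lambda)(z-\lambda))$, where $G$ is an entire generating function of $\Lambda$. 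Fix a factorisation $G = G_1 G_2$ with $G_j$ having zero set $\Lambda_j$.

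Any $F$ in the orthogonal complement $V$ of $\mathcal{G}_{\Lambda_1,\Lambda_2}$ must vanish on $\Lambda_2$, so $F = G_2\phi$ for an entire function $\phi$. The biorthogonality $\langle K_\mu, h_\lambda\rangle = \delta_{\mu\lambda}$ gives $h_\lambda(\mu) = 0$ for all $\mu \in \Lambda_2$, $\lambda \in \Lambda_1$, hence $h_\lambda = G_2 \cdot G_1(z)/(G_1'(\lambda) G_2(\lambda)(z-\lambda))$ already belongs to the closed subspace $\mathcal{F}_{\Lambda_2} := \{F \in \mathcal{F} : F|_{\Lambda_2}=0\}$. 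The map $F = G_2\phi \mapsto \phi$ is a unitary isomorphism from $\mathcal{F}_{\Lambda_2}$ onto the weighted Hilbert space
\[
\widetilde{\mathcal{F}} := \Bigl\{\phi \text{ entire} : \int_{\mathbb{C}} |\phi|^2 |G_2|^2 e^{-\pi|z|^2}\, dA < \infty\Bigr\},
\]
and transforms $V$ into
\[
\widetilde V = \Bigl\{\phi \in \widetilde{\mathcal{F}} : \phi \perp G_1/(z-\lambda) \text{ in } \widetilde{\mathcal{F}} \text{ for every } \lambda \in \Lambda_1\Bigr\}.
\]
A direct calculation shows that the reproducing kernels $\{\widetilde K_\lambda\}_{\lambda \in \Lambda_1}$ of $\widetilde{\mathcal{F}}$ form a complete and minimal system with biorthogonal system $\{G_1(z)/(G_1'(\lambda)(z-\lambda))\}_{\lambda \in \Lambda_1}$; completeness follows from the observation that $\phi \in \widetilde{\mathcal{F}}$ vanishing on $\Lambda_1$ must equal $G_1\psi$ with $G\psi \in \mathcal{F}$, which forces $\psi\equiv 0$ since $G$ has critical Fock growth. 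Thus $\widetilde V$ is precisely the orthogonal complement in $\widetilde{\mathcal{F}}$ of the biorthogonal system to the complete and minimal kernel system $\{\widetilde K_\lambda\}_{\lambda\in\Lambda_1}$.

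Theorem \ref{T2} now reduces to the following quantitative variant of Belov's completeness theorem in $\widetilde{\mathcal{F}}$: the orthogonal complement of the biorthogonal to a complete and minimal reproducing kernel system in $\widetilde{\mathcal{F}}$ has dimension at most $1$. The principal difficulty is that $\widetilde{\mathcal{F}}$ is not a standard Fock space --- its weight $|G_2|^2 e^{-\pi|z|^2}$ vanishes on $\Lambda_2$, so point evaluation fails to be continuous there, and the classical RKHS framework breaks down. It is precisely this breakdown that permits the possible ``one-dimensional defect'' with respect to Belov's bound of $0$ in the unweighted Fock space. The plan is to adapt Belov's growth and uniqueness arguments for entire functions of critical Fock growth to this modified setting, using Phragm\'en--Lindel\"of-type bounds to control the single extra direction introduced by the zeros $\Lambda_2$ of the weight. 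This sharp control at critical density in the presence of weight zeros is the main technical obstacle.
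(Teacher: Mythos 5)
Your reduction to the weighted space $\widetilde{\mathcal F}$ with weight $|G_2|^2e^{-\pi|z|^2}$ is sound as far as it goes, and it parallels the first step of the paper's argument (where one writes $H_2=G_2S_2$ for any vector $H_2$ orthogonal to $\{k_\lambda\}_{\lambda\in\Lambda_2}$). But the proposal stops exactly where the proof has to begin: everything after the reduction is a statement of intent (``the plan is to adapt Belov's growth and uniqueness arguments \dots\ this is the main technical obstacle''), and no mechanism is offered that would actually produce the bound of \emph{one} dimension rather than zero or infinity. In particular, it is not explained why a Phragm\'en--Lindel\"of argument should control ``the single extra direction'': the weight $|G_2|^2e^{-\pi|z|^2}$ vanishes on the infinite, essentially arbitrary set $\Lambda_2$ and has no regularity one could feed into such an argument, and Belov's completeness proof (which gives defect $0$ for the biorthogonal system in the genuine Fock space) does not by itself localize a defect to a one-dimensional subspace in a degenerate weighted space.

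For comparison, the paper's proof of the codimension bound does not work inside $\widetilde{\mathcal F}$ at all. It takes \emph{two} hypothetical orthonormal vectors $H,\tilde H$ in the complement of the mixed system, attaches to each a pair of entire functions $S_1,S_2$ via residue/interpolation identities over the lattice $\mathcal Z=\mathbb Z+i\mathbb Z$ (with the Weierstrass $\sigma$-function serving as a substitute for a Riesz basis of kernels), proves via a Liouville-type theorem for functions bounded off ``thin'' sets and via weak-type estimates for Cauchy transforms that
$S_1S_2/\sigma_0=\langle H_2,H_1\rangle_{\mathcal F}/z+o(|z|^{-1})$ off a thin set for each of the four pairings, and then derives a contradiction from the trivial algebraic identity $(S_1S_2)(\tilde S_1\tilde S_2)=(S_1\tilde S_2)(\tilde S_1 S_2)$, since orthogonality of $H$ and $\tilde H$ forces the right-hand side to be $o(|z|^{-2})$ while the left-hand side is $\asymp|z|^{-2}$. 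It is this last pairing trick that converts ``defect at most countable'' into ``defect at most one,'' and nothing in your proposal anticipates or replaces it. To complete your approach you would need to supply an analogous quantitative argument; as written, the central claim of the theorem remains unproven.
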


In the setting of exponential systems 
on an interval a similar problem was solved in \cite{BBB}. 
It was a longstanding problem in nonharmonic Fourier analysis 
whether any complete and minimal system $\{e_\lambda\}_{\lambda\in\Lambda}$ 
in $L^2(-a,a)$, where $e_{\lambda}(t)=e^{i\lambda t}$, is hereditarily complete. 
Surprisingly the answer is the same: there exists nonhereditarily complete 
exponential systems, but the orthogonal complement to any mixed system 
is at most one-dimensional. 

The results of \cite{BBB} were generalized to systems 
of reproducing kernels in general de Branges spaces \cite{bb,BBB2} 
(note that exponential systems are unitarily equivalent to 
reproducing kernel systems in the Paley--Wiener space); 
in this case, however, the complement to a mixed system can have 
arbitrary (even infinite) dimension. At the same time,  
a full description was given in \cite{BBB2} for 
those de Branges spaces where any complete and minimal system 
of reproducing kernels is hereditarily complete. These are the  
de Branges spaces which coincide with some radial Fock spaces (the corresponding weight necessarily will have very slow growth of order at most $\exp(\log^2r)$ in contrast to the classical weight $\exp(\pi r^2)$).

While a system biorthogonal to a complete and minimal system 
of reproducing kernels in a de Branges space may have arbitrarily large defect, 
it was shown in \cite{bbb17} that in Fock-type spaces with mild regularity 
of the weight, the biorthogonal system is always complete. 
However, the conjecture that any complete and minimal system 
of reproducing kernels in a Fock space is hereditarily complete
is refuted by our Theorem \ref{T1}.

The result of \cite{BBB} may have the following Gabor-analysis interpretation. 
Consider a Gabor system $\mathcal{I}_{\mathbb{Z}\times\Lambda}$ associated with 
the window $\varphi:=\chi_{[0,1]}$, that is,
$$
\mathcal{I}_{\mathbb{Z}\times\Lambda}=
\bigl{\{}e^{2\pi i\lambda t}\chi_{(n,n+1)}(t)\bigr{\}}_{(n,\lambda)
\in\mathbb{Z}\times\Lambda}.
$$
Note that the system $\mathcal{I}_{\mathbb{Z}\times\mathbb{Z}}$ 
is an orthonormal basis in $L^2(\mathbb{R})$. 
By the results of \cite{BBB}, there exists a sequence 
$\Lambda\subset\mathbb{R}$ (which is a bounded perturbation of $\mathbb{Z}$) 
such that $\{e^{2\pi i \lambda t }\}_{\lambda\in\Lambda}$ is complete 
and minimal in $L^2(n, n+1)$ but not a strong Markushevich basis.
Hence, for any such system $\mathcal{I}_{\mathbb{Z}\times\Lambda}$ there exist 
mixed systems with any given finite or infinite defect. 

It is possible that the maximal size of the orthogonal complement to the mixed systems 
should in general depend on the time-frequency localization properties of the window function. In particular, 
it would be interesting to know whether there exists a window generating 
only strong Markushevich bases.

\subsection*{Organization of the paper.}
In Section 2 we discuss a translation of our problems to the setting 
of the classical Fock space of entire functions. In Section 3 we prove Theorem~\ref{t1} which is a reformulation of Theorem~\ref{T1}, 
while Section 4 is devoted to the proof of Theorem~\ref{codim}  which is a reformulation of Theorem~\ref{T2}.

\subsection*{Notations.} Throughout this paper the notation $U(x)\lesssim V(x)$ means that there is a constant $C$ such that
$U(x)\leq CV(x)$ holds for all $x$ in the set in question, $U, V\geq 0$. We write $U(x)\asymp V(x)$ if both $U(x)\lesssim V(x)$ and
$V(x)\lesssim U(x)$.

\subsubsection*{\bf Acknowledgments}
We are thankful to Misha Sodin for pointing   
to a version of the Ahlfors--Beurling--Carleman theorem in \cite{T}.
\bigskip

\section{Bargmann transform and the Fock space}

To translate our Gabor expansions problems into the language of entire functions we use the Bargmann transform $\mathcal{B}$:
\begin{multline*}
  \mathcal{B}f(z):=2^{1\slash4}e^{-i\pi xy}e^{\frac{\pi}{2}|z|^2}\int_\mathbb{R}f(t)e^{2\pi i yt}e^{-\pi (t-x)^2}dt \\
  = 2^{1\slash4}\int_{\mathbb{R}}f(t)e^{-\pi t^2}e^{2\pi tz}e^{-\frac{\pi}{2} z^2}dt, \quad z=x+iy.
\end{multline*}
The operator $\mathcal{B}$ maps unitarily $L^2(\mathbb{R})$ onto $\mathcal{F}$, where $\mathcal{F}$ is the classical Fock space
$$
\mathcal F=\{f\in\Hol(\CC):\|f\|^2_\mathcal F=\int_{\CC}|f(z)|^2e^{-\pi|z|^2}\,dm_2(z)<\infty\},
$$ 
$dm_2$ being planar Lebesgue measure.
Moreover, $\mathcal{B}$ maps every time-frequency shift of the Gaussian 
to a normalized reproducing kernel of $\mathcal{F}$. 
For $\lambda\in\mathbb{C}$ put 
$$
k_\lambda(z):=e^{\pi\bar{\lambda}z}.
$$
The function $k_\lambda(z)$ is the reproducing kernel for $\mathcal{F}$,
\begin{align*}
f(\lambda)&=\langle f, {k}_\lambda \rangle_{\mathcal F}, \qquad f\in\mathcal F,\\
\|{k}_\lambda\|_{\mathcal F}&=e^{\pi |\lambda|^2/2},\qquad \lambda\in\CC.
\end{align*}
It is easy to see that for $\lambda=u+iv$
$$
2^{1\slash4}\mathcal{B}(\tau_{u,v}\gamma)(z)=
e^{-\pi|\lambda|^2\slash2 }e^{\pi\lambda z}=\frac{k_{\overline 
\lambda}(z)}
{\|k_{\overline \lambda}\|_\mathcal{F}}.
$$
Here (and in what follows) we identify $\mathbb{R}^2$ with 
$\mathbb{C}$ and $(u,v)\in\mathbb{R}^2$ 
with the complex number $\lambda = u+iv$.

Thus the system $\mathcal{G}_\Lambda$ is complete and minimal 
in $L^2(\mathbb{R})$ if and only if the corresponding system of 
reproducing kernels $\{k_\lambda\}_{\lambda\in\Lambda}$ 
is complete and minimal in $\mathcal{F}$. Furthermore, 
this is equivalent to the existence of the so called generating 
function $G$ such that $G$ has simple zeros exactly at 
$\Lambda$, $g_\lambda:=G\slash(\cdot-\lambda)$ belongs 
to $\mathcal{F}$ for some (every) $\lambda\in\Lambda$ and there 
is no non-trivial entire function $T$ such that $GT\in\mathcal{F}$. 
Then the system $\{g_\lambda\slash G'(\lambda)\}_{\lambda\in\Lambda}$ 
is biorthogonal to the system  $\{k_\lambda\}_{\lambda\in\Lambda}$.

Finally, the orthogonal complement to the mixed system $\mathcal{G}_{\Lambda_1,\Lambda_2}$ is of the same dimension as the orthogonal complement to the mixed system
$$\{k_\lambda\}_{\lambda\in\Lambda_2}\cup\{g_\lambda\}_{\lambda\in\Lambda_1}.$$

Thus, our Theorems \ref{T1}, \ref{T2} can be reformulated as follows

 
\begin{theorem}\label{t1} There exists a complete minimal system of reproducing kernels $\{k_{\lambda}\}_{\lambda \in \Lambda}$ 
such that $\Lambda \subset\mathbb C$ is the disjoint union of $\Lambda_1$ and $\Lambda_2$, and the system 
$\{k_{\lambda}\}_{\lambda \in \Lambda_2}\cup \{g_{\lambda}\}_{\lambda \in \Lambda_1}$ is not complete in $\mathcal F$.
\end{theorem}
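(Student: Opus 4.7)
The plan is to construct a generating function $G$, a partition $\Lambda = \Lambda_1 \cup \Lambda_2$ of its zero set, and a nonzero $h \in \mathcal F$ orthogonal to every element of the mixed system. By the reproducing property, $h \perp k_\lambda$ for $\lambda \in \Lambda_2$ simply requires that $h$ vanish on $\Lambda_2$, while $h \perp g_\lambda$ for $\lambda \in \Lambda_1$ is an analytic condition that I read as a Cauchy-type interpolation identity. Factoring $G = G_1 G_2$ with $G_i$ entire having simple zeros exactly on $\Lambda_i$, vanishing on $\Lambda_2$ forces $h = G_2 \tilde h$ with $\tilde h$ entire, so the task reduces to producing $\tilde h$ such that $G_2 \tilde h \in \mathcal F$ and $\langle G_2 \tilde h, g_\lambda\rangle_{\mathcal F} = 0$ for every $\lambda \in \Lambda_1$.

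First, I would choose $\Lambda$ as a perturbation of the Gaussian lattice $\mathbb Z + i\mathbb Z$ close to the critical Fock density, adjusted so that $G$ is a bona fide generating function in the sense of Section~2: $g_\lambda \in \mathcal F$ and no nontrivial entire $T$ yields $GT \in \mathcal F$. I would then split $\Lambda$ into two infinite half-density subsets $\Lambda_1, \Lambda_2$, for instance by an index-two sub-lattice structure, so that $|G_1(z)|$ and $|G_2(z)|$ both grow like $e^{\pi|z|^2/4}$ off their zero sets while $|G_1 G_2| \asymp e^{\pi|z|^2/2}$ holds in the averaged sense required for a Fock-space generating function.

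Next, I would rewrite the orthogonality relations. Since $g_\lambda = G_2 \cdot G_1/(z - \lambda)$ for $\lambda \in \Lambda_1$, the condition becomes $\langle \tilde h, G_1/(z-\lambda)\rangle_{W} = 0$ in the weighted $L^2$ space $W := L^2(|G_2|^2 e^{-\pi|z|^2}\, dm_2)$. With the critical half-density growth, this weight behaves on average like a Fock weight of type $e^{-\pi|z|^2/2}$, and the question turns into an incompleteness question for a ``halved'' subsystem in a larger Fock-type space. I would realize the desired $\tilde h$ explicitly as a canonical product adapted to $\Lambda_1$ with one shifted or added zero, producing the required one-dimensional obstruction to completeness.

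The main obstacle is the membership $h \in \mathcal F$. The weight $|G_2|^2 e^{-\pi|z|^2}$ is only bounded (not integrable) on average, so $\tilde h$ must exhibit genuine decay along sufficiently many directions to place $h$ in $\mathcal F$. Pinning this down requires sharp two-sided pointwise estimates of Weierstrass-type products both near and away from their zero sets, together with a delicate exploitation of the cancellation arising from the periodic sub-lattice structure. A secondary difficulty is guaranteeing the generating property of $G$ simultaneously with the splitting, which forces $\Lambda$ to sit exactly at the critical Fock density.
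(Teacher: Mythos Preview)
Your plan correctly isolates the structural reduction ($h=G_2\tilde h$, then impose the $g_\lambda$-orthogonality), but the concrete construction you sketch does not close. With a symmetric half-density split and $|G_i(z)|\asymp e^{\pi|z|^2/4}$ off the zeros, your candidate $\tilde h$ --- a canonical product on $\Lambda_1$ with one zero shifted --- still satisfies $|\tilde h|\asymp|G_1|\asymp e^{\pi|z|^2/4}$ away from $\Lambda_1$, hence $|h|=|G_2\tilde h|\asymp e^{\pi|z|^2/2}$: precisely the borderline growth of the generating function $G$ itself, which is \emph{not} in $\mathcal F$. A single shifted zero cannot manufacture the decay required; the ``main obstacle'' you flag is in fact fatal for this ansatz. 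The weighted reformulation does not rescue it either, since $|G_2|^2e^{-\pi|z|^2}$ vanishes on $\Lambda_2$ and is not comparable to any Fock weight, so the problem does not reduce to an incompleteness question in a genuine half-order Fock space.

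The paper's construction is entirely different and highly \emph{asymmetric}: $\Lambda_1=\{v_n\}$ is a lacunary real sequence ($v_n\approx 2^{n-1}Q$), so $G_1$ is a zero-order canonical product and $G_2$ carries essentially all of the growth $e^{\pi|z|^2/2}$. Rather than exhibiting a single orthogonal $h$, the paper produces two functions, both small $\mathcal F$-perturbations of $\sigma_3(z)=\sigma(z)/[z(z-1)(z-2)(z-3)]$: the function $F=\sigma_3+\sum_n u_n^{-1/2}(\Bbbk_{u_n}-\Bbbk_{u_n+1})$ vanishes on $\Lambda_2$ by design, while $H=\sigma_3+\sum_n d_n u_n^{-1/3}\Bbbk_{u_n}$ is forced to satisfy $\langle g_{v_n},H\rangle=0$ for every $n$ by solving for the $d_n$ a linear system that the lacunarity of $\Lambda_1$ makes diagonally dominant. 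Since both $F$ and $H$ are close to $\sigma_3$ one gets $\langle F,H\rangle\ne 0$; combined with the biorthogonality $g_\lambda\perp k_\mu$ for $\lambda\in\Lambda_1$, $\mu\in\Lambda_2$, a Hahn--Banach argument then yields incompleteness of the mixed system. None of these ingredients --- the lacunary $\Lambda_1$, the two-function $(F,H)$ scheme, or the perturbative solution for the $d_n$ --- appear in your outline.
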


\begin{theorem}\label{codim} Let $\{k_\lambda\}_{\lambda\in\Lambda}$ be a complete and minimal system in $\mathcal{F}$, let $\{g_\lambda\}$  be its biorthogonal system, 
and let $\Lambda$ be the disjoint union of $\Lambda_1$ and $\Lambda_2$. Then the orthogonal complement to the mixed system
$$
\{k_\lambda\}_{\lambda\in\Lambda_2}\cup\{g_\lambda\}_{\lambda\in\Lambda_1}
$$
is at most one-dimensional.
\end{theorem}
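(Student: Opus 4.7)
The strategy is by contradiction. Suppose two linearly independent $u,v\in\mathcal F$ both lie in the orthogonal complement of the mixed system; the aim is to extract a non-trivial entire function $T_0$ with $GT_0\in\mathcal F$, contradicting the extremal property of the generating function $G$ (equivalently, the completeness of $\{k_\lambda\}_{\lambda\in\Lambda}$ as recorded in Section~2).

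First, factorize $G=G_1 G_2$ with each $G_i$ entire and having simple zeros precisely on $\Lambda_i$; this factorization is automatic in the category of entire functions although neither factor need belong to $\mathcal F$. The orthogonality of $u$ (and of $v$) to $\{k_\lambda\}_{\lambda\in\Lambda_2}$ means that $\tilde u:=u/G_2$ and $\tilde v:=v/G_2$ are entire. Using $g_\lambda=G/((\cdot-\lambda)G'(\lambda))$ and $G'(\lambda)=G_1'(\lambda)G_2(\lambda)$ for $\lambda\in\Lambda_1$, the remaining orthogonality $(u,g_\lambda)_\mathcal F=0$, $\lambda\in\Lambda_1$, rewrites (up to a nonzero factor) as the vanishing on $\Lambda_1$ of the Cauchy-type transform
\[
\mathfrak C_f(w):=\int_\CC \frac{f(z)\overline{G(z)}}{\bar z-\bar w}\, e^{-\pi|z|^2}\, dm_2(z),\qquad f\in\{u,v\}.
\]
A direct distributional computation (writing $1/(\bar z-\bar w)=-1/(\bar w-\bar z)$ and invoking the fundamental solution of $\bar\partial$) shows that the non-holomorphic part of $\mathfrak C_f$ is proportional to $f\,\overline G\,e^{-\pi|z|^2}$, so these transforms encode the full data of $f$ together with the orthogonality constraints.

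The heart of the argument is to form an antisymmetric Wronskian-type combination of $u$, $v$, $\mathfrak C_u$, $\mathfrak C_v$ (possibly involving the reflections $u^*(z):=\overline{u(\bar z)}$, $v^*(z):=\overline{v(\bar z)}$) so that the $\bar\partial$-contributions cancel exactly, producing an entire function $T$. By construction, $T$ vanishes on $\Lambda_2$ (inherited from $u$, $v$) and on $\Lambda_1$ (inherited from $\mathfrak C_u$, $\mathfrak C_v$), so $T_0:=T/G$ is entire. An $L^2$-estimate using $u,v\in\mathcal F$, a Cauchy--Schwarz bound for the Cauchy integrals against the Fock weight, and the factorization $G=G_1G_2$, then yields $T\in\mathcal F$, equivalently $GT_0\in\mathcal F$. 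The extremal property of $G$ forces $T_0\equiv 0$, whence $T\equiv 0$; and the vanishing $T\equiv 0$, combined with the $\bar\partial$-identity for $\mathfrak C_u$ and $\mathfrak C_v$, forces $u$ and $v$ to be proportional. This contradicts linear independence and establishes that the orthogonal complement is at most one-dimensional.

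The principal obstacle is the construction in the third paragraph: identifying the precise Wronskian-type combination that is simultaneously entire (via exact antisymmetric cancellation of the $\bar\partial$-contributions) and a member of $\mathcal F$. The difficulty lies in the fact that $\mathfrak C_f$ of a function in $\mathcal F$ is not itself in $\mathcal F$; the required integrability of the product must be recovered from the specific bilinear cancellation together with the structure of the Fock weight, in close analogy with the construction carried out for exponential systems in \cite{BBB}.
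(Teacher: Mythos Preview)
Your proposal is a strategy, not a proof: you yourself identify the ``principal obstacle'' and leave it unresolved. Unfortunately, that obstacle is not a technicality---it is the whole difficulty, and in the Fock setting the Wronskian mechanism you hope to import from \cite{BBB} does not go through as stated.

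Concretely, write $\Psi_f(w)=\overline{\mathfrak C_f(w)}=\int_\CC \dfrac{G(z)\overline{f(z)}}{z-w}\,e^{-\pi|z|^2}\,dm_2(z)$. This is a planar Cauchy transform of an absolutely continuous measure, so $\bar\partial_w\Psi_f=-\pi\,G\,\overline f\,e^{-\pi|\cdot|^2}$, while $\partial_w\Psi_f$ is a Beurling-type singular integral that is neither zero nor controllable in any useful way. For the natural candidate $T=u\Psi_v-v\Psi_u$ one gets
\[
\bar\partial T=-\pi\,G\,(u\bar v-v\bar u)\,e^{-\pi|\cdot|^2},
\]
which vanishes only when $u/v$ is real-valued; the analogous combinations with $u^*,v^*$ or with $\mathfrak C_f$ in place of $\Psi_f$ run into the Beurling transform and do not cancel either. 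The reason the trick works in \cite{BBB} is that there the measure lives on $\mathbb R$, so the Cauchy transform is genuinely holomorphic in each half-plane and the Wronskian of the two boundary values is entire by Morera. In the Fock space the weight $e^{-\pi|z|^2}\,dm_2$ is two-dimensional, there is no ``above/below'' dichotomy, and no bilinear combination of $u,v,\mathfrak C_u,\mathfrak C_v$ is entire. This is exactly the difficulty the paper flags when it remarks that $\mathcal F$ has no Riesz basis of reproducing kernels, the tool on which \cite{BBB,BBB2} rest.

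The paper's route is genuinely different. It replaces the continuous Cauchy transform by a \emph{discrete} one over the lattice $\mathcal Z_0=(\mathbb Z+i\mathbb Z)\setminus\{0\}$, using the Weierstrass function $\sigma_0$ as a surrogate generating function. From $H\perp\{k_\lambda\}_{\lambda\in\Lambda_2}$ and $H\perp\{g_\lambda\}_{\lambda\in\Lambda_1}$ one extracts two entire functions $S_1,S_2$ (via interpolation/residue identities over $\mathcal Z_0$) and proves the key asymptotic
\[
\frac{S_1(z)S_2(z)}{\sigma_0(z)}=\frac{\langle H_2,H_1\rangle_{\mathcal F}}{z}+o(|z|^{-1}),\qquad z\to\infty,\ z\notin\Omega,
\]
where $\Omega$ is a ``thin'' exceptional set; this requires a tailored Liouville-type lemma (Lemma~\ref{lemmaLi}) based on harmonic-measure estimates. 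With two orthonormal $H,\tilde H$ one obtains four such quantities $A,B,C,D$ satisfying $A,B\sim 1/z$, $C,D=o(1/z)$, yet $AB=CD$ identically---an immediate contradiction. No Wronskian, no $\bar\partial$-cancellation, and no attempt to place a product in $\mathcal F$: the contradiction is purely asymptotic.
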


One of the difficulties of dealing with the Fock space is that, 
in contrast to the de Branges spaces, it does not
possess any Riesz basis of reproducing kernels -- a tool which plays 
a crucial role in \cite{BBB, BBB2}. A good substitute
of such an orthogonal basis will be the system of the reproducing 
kernels associated with the lattice $\mathbb{Z} + i\mathbb{Z}$.

Let $\kl={k}_\lambda/\|{k}_\lambda\|$
be the normalized reproducing kernel at $\lambda$. Let $\sigma$ be the Weierstrass $\sigma$-function associated to the lattice $\mathcal{Z}=\mathbb Z+i\mathbb Z$,  
$$
\sigma(z)=z\prod_{\lambda\in\mathcal Z\setminus\{0\}}
\Bigl(1-\frac{z}{\lambda}\Bigr)e^{\frac{z}{\lambda}+\frac{z^2}{2\lambda^2}}.
$$
It is well-known that $\sigma$ is real on $\mathbb R$ and 
\begin{equation}
|\sigma(z)|\asymp\dist(z,\mathcal{Z})e^{\pi|z|^2\slash2},\qquad z\in\CC. \label{dop71}
\end{equation}
Set $\mathcal Z_0=\mathcal Z\setminus\{0\}$. 
Estimate \eqref{dop71} yields that the system $\{\Bbbk_w\}_{w\in\mathcal Z_0}$ is complete and minimal and $\sigma_0(z)=\sigma(z)/z$ is its generating function.  The system
 $\bigl{\{}\frac{\|k_w\|}{\sigma'_0(w)}\cdot\frac{\sigma_0}{\cdot-w}\bigr{\}}$ is its  biorthogonal system.
We can associate with every function $F\in\mathcal{F}$ its formal Fourier series {\it with respect to the system} $\{\Bbbk_w\}_{w\in\mathcal Z_0}$ 
by  
\begin{equation}\label{fo11}
F\sim \sum_{w\in \mathcal Z_0}\overline{b_w}\Bbbk_w,\quad\qquad b_w=
\Bigl\langle \frac{\|k_w\|}{\sigma'_0(w)}\cdot\frac{\sigma_0}{\cdot-w},F\Bigr\rangle_\mathcal{F}.
\end{equation}
Furthermore, we can write the (formal) Lagrange interpolation formula
\begin{equation}\label{fo12}
F\sim \sum_{w\in \mathcal Z_0}a_w\frac{\|k_w\|}{\sigma'_0(w)}\cdot\frac{\sigma_0}{\cdot-w},
\qquad a_w=\frac{F(w)}{\|k_w\|}.
\end{equation}
It is known that $(a_w)\in\ell^2(\mathcal Z_0)$, see e.g. \cite{S}.

\begin{lemma}[see \cite{b1}]\label{cl3}  We have
$$
\Bigl\|\frac{\sigma_0}{\cdot-w}\Bigr\|\lesssim\frac{\log^{1/2}(1+|w|)}{|w|},\qquad |b_w|^2\lesssim\log(1+|w|),\qquad w\in\mathcal Z_0.
$$
\end{lemma}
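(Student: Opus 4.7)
The plan is to reduce the first inequality to a purely geometric estimate using \eqref{dop71}. Since $|\sigma(z)|^2 e^{-\pi|z|^2} \asymp \dist(z,\mathcal Z)^2$, the Fock norm becomes
$$\Bigl\|\frac{\sigma_0}{\cdot-w}\Bigr\|^2 = \int_\CC \frac{|\sigma(z)|^2 e^{-\pi|z|^2}}{|z|^2\,|z-w|^2}\, dm_2(z) \asymp \int_\CC \frac{\dist(z,\mathcal Z)^2}{|z|^2\,|z-w|^2}\, dm_2(z).$$
I would split the plane into the unit squares $Q_k$ centered at the lattice points $k\in\mathcal Z$ and estimate each piece separately. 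On a square $Q_k$ with $k\notin\{0,w\}$ the denominators are bounded below by positive multiples of $|k|^2$ and $|k-w|^2$, while $\dist(z,\mathcal Z)^2\le 1/2$; this contributes $O(|k|^{-2}|k-w|^{-2})$. On the two exceptional squares the factor $\dist(z,\mathcal Z)^2$ controls the local zero of the denominator: on $Q_0$ one uses $\dist(z,\mathcal Z)\le|z|$, and on $Q_w$ one uses $\dist(z,\mathcal Z)\le|z-w|$; each produces only $O(|w|^{-2})$.

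The remaining work is the lattice sum $S(w):=\sum_{k\in\mathcal Z_0\setminus\{w\}} |k|^{-2}|k-w|^{-2}$, which I would split into three regions: $|k|\le|w|/2$, $|k-w|\le|w|/2$, and both $\ge|w|/2$. In the first region $|k-w|\asymp|w|$, and summing $|k|^{-2}$ over the lattice points in a disk of radius $|w|/2$ produces a logarithmic factor, giving $O(\log(1+|w|)/|w|^2)$; the second region is symmetric. In the third region a dyadic ring count around $w$ (for $|k|\le 2|w|$) together with a comparison to $\sum_{|k|\gtrsim|w|}|k|^{-4}$ (for $|k|>2|w|$) yields only $O(|w|^{-2})$. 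Altogether $\|\sigma_0/(\cdot-w)\|^2 \lesssim \log(1+|w|)/|w|^2$.

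For the second estimate I would apply the Cauchy--Schwarz inequality to \eqref{fo11}:
$$|b_w| \le \frac{\|k_w\|}{|\sigma_0'(w)|}\cdot\Bigl\|\frac{\sigma_0}{\cdot-w}\Bigr\|\cdot\|F\|_{\mathcal F}.$$
Since $\sigma(w)=0$, a direct differentiation gives $\sigma_0'(w)=\sigma'(w)/w$, and \eqref{dop71} combined with the simplicity of the zero of $\sigma$ at $w$ forces $|\sigma'(w)|\asymp e^{\pi|w|^2/2}$. Together with $\|k_w\|=e^{\pi|w|^2/2}$, this yields $\|k_w\|/|\sigma_0'(w)|\asymp|w|$; inserted into the inequality above, the factor $|w|$ cancels the $|w|^{-1}$ from the first estimate and produces $|b_w|\lesssim \log^{1/2}(1+|w|)\,\|F\|_{\mathcal F}$, as claimed.

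The main obstacle is the careful bookkeeping of the transition region $|k|\asymp|w|$ in the lattice sum $S(w)$: one must ensure that grouping those lattice points into annular shells around $w$ produces at most a bounded (rather than logarithmic) contribution, so that the factor $\log(1+|w|)$ in the final bound comes only from the two close-range regions $|k|\lesssim|w|$ and $|k-w|\lesssim|w|$.
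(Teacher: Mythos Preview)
Your argument is correct. The paper itself does not supply a proof of this lemma; it is simply quoted from \cite{b1}, so there is no in-paper proof to compare against. Your direct computation---reducing the Fock norm via \eqref{dop71} to the lattice sum $\sum_{k\in\mathcal Z_0\setminus\{w\}}|k|^{-2}|k-w|^{-2}$ and splitting the latter into the three regions $|k|\le|w|/2$, $|k-w|\le|w|/2$, and the remainder---is the natural approach and is essentially how such estimates are obtained in \cite{b1}. The ``main obstacle'' you flag is not really an obstacle: in the transition zone $|k|\asymp|w|$, $|k-w|\gtrsim|w|/2$, one has $|k|^{-2}\asymp|w|^{-2}$ and the remaining sum $\sum_{|w|/2<|k-w|\lesssim|w|}|k-w|^{-2}$ is $O(1)$ by a dyadic count, exactly as you outlined earlier; the logarithm indeed arises only from the two close-range regions. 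The deduction of the $b_w$ bound via Cauchy--Schwarz and $|\sigma_0'(w)|\asymp e^{\pi|w|^2/2}/|w|$ is also correct.
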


Although the series \eqref{fo11} and \eqref{fo12} do not converge for general 
$F\in\mathcal F$, our argument in Section~\ref{se3} uses the coefficients 
$(a_w)_{w\in\mathcal Z_0}$ and $(b_w)_{w\in\mathcal Z_0}$. 
In particular, we make use of the following observation from \cite[Lemma 3.1]{b1}.

\begin{lemma}
If $G$ is the generating function of a complete and minimal system of reproducing kernels $\{k_\lambda\}_{\lambda\in\Lambda}$ in $\mathcal{F}$ and $\Lambda\cap\mathcal{Z}=\emptyset$, then for 
every three distinct points $\lambda_1,\lambda_2,\lambda_3\in\Lambda$ and for every 
$F\in\mathcal{F}$ we have
\begin{multline}
\Bigl\langle\frac{G(z)}{(\cdot-\lambda_1)(\cdot-\lambda_2)(\cdot-\lambda_3)}, F{\Bigr\rangle}_{\mathcal{F}}
\\
=\sum_{w\in\mathcal{Z}\setminus\{0\}}\frac{G(w)b_w}{(w-\lambda_1)(w-\lambda_2)(w-\lambda_3)\|k_w\|},
\label{intform}
\end{multline}
with $b_w$ defined in \eqref{fo11}.
\label{l3}
\end{lemma}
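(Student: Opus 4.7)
The plan is to represent $H(z) := G(z)/[(z-\lambda_1)(z-\lambda_2)(z-\lambda_3)]$ as a norm-convergent series in $\mathcal{F}$ and then pair with $F$ term by term. Observe first that $H\in\mathcal F$: the factor $g_{\lambda_1}=G/(\cdot-\lambda_1)$ lies in $\mathcal F$, the simple zeros of $G$ at $\lambda_2,\lambda_3$ cancel the other potential poles, and the extra factors only improve decay. For $w\in\mathcal Z_0$ set $\psi_w(z):=\sigma_0(z)/(z-w)$, which is entire because $\sigma_0(w)=0$, and define
\begin{equation*}
T(z):=\sum_{w\in\mathcal Z_0}\frac{G(w)}{(w-\lambda_1)(w-\lambda_2)(w-\lambda_3)\sigma'_0(w)}\,\psi_w(z).
\end{equation*}
Rearranging the definition \eqref{fo11} of $b_w$ gives $\langle\psi_w,F\rangle=(\sigma'_0(w)/\|k_w\|)\,b_w$; hence, once $T$ is shown to converge in $\mathcal F$ and to equal $H$, pairing $T$ with $F$ term by term produces exactly the right-hand side of \eqref{intform}.

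For the norm convergence of $T$, Lemma~\ref{cl3} gives $\|\psi_w\|\lesssim\log^{1/2}(1+|w|)/|w|$, while \eqref{dop71} implies $|\sigma'_0(w)|\asymp\|k_w\|/|w|$. So the sum of the $\mathcal F$-norms of the terms is bounded by a constant times
\begin{equation*}
\sum_{w\in\mathcal Z_0}\frac{|G(w)|\log^{1/2}(1+|w|)}{|w-\lambda_1||w-\lambda_2||w-\lambda_3|\,\|k_w\|}.
\end{equation*}
By Cauchy--Schwarz this is majorized by $A^{1/2}B^{1/2}$ with $A=\sum_w|G(w)|^2/[|w-\lambda_1|^2\|k_w\|^2]$ and $B=\sum_w\log(1+|w|)/[|w-\lambda_2|^2|w-\lambda_3|^2]$. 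The quantity $A$ is finite because $g_{\lambda_1}\in\mathcal F$ combined with the general fact that $(F(w)/\|k_w\|)_w\in\ell^2(\mathcal Z_0)$ for every $F\in\mathcal F$ (recorded right after \eqref{fo12}), while $B$ converges because its summand is $\lesssim\log|w|/|w|^4$ for large $|w|$.

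Finally, to identify $T$ with $H$, I would evaluate both sides at any $w_0\in\mathcal Z_0$: every term of $T$ with $w\neq w_0$ vanishes because $\psi_w(w_0)=\sigma_0(w_0)/(w_0-w)=0$, while the $w=w_0$ term contributes $\psi_{w_0}(w_0)=\sigma'_0(w_0)$, giving $T(w_0)=H(w_0)$. Thus $T-H\in\mathcal F$ vanishes on $\mathcal Z_0$, which is a uniqueness set by the stated completeness of $\{\Bbbk_w\}_{w\in\mathcal Z_0}$ in $\mathcal F$, so $T=H$. The main obstacle lies in the convergence step: since $\mathcal Z_0$ sits at the critical Fock density, the analogous Lagrange series for a generic $F\in\mathcal F$ diverges, and convergence here depends essentially on the additional $|w|^{-2}$ decay supplied by the cubic denominator of $H$, which is precisely what allows the Cauchy--Schwarz split to absorb the $\log^{1/2}$ factors coming from Lemma~\ref{cl3}.
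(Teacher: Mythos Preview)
The paper does not give its own proof of this lemma; it is quoted verbatim from \cite[Lemma~3.1]{b1}. Your argument is correct and essentially reproduces what one expects the original proof to be: write the Lagrange interpolation series for $H=G/[(\cdot-\lambda_1)(\cdot-\lambda_2)(\cdot-\lambda_3)]$ with respect to $\{\sigma_0/(\cdot-w)\}_{w\in\mathcal Z_0}$, prove norm convergence in $\mathcal F$ via Cauchy--Schwarz (using the $\ell^2$ sampling bound for $g_{\lambda_1}$, the norm estimate of Lemma~\ref{cl3}, and the extra $|w|^{-2}$ decay from the cubic denominator), identify the sum with $H$ by checking agreement on the uniqueness set $\mathcal Z_0$, and finally pair term by term with $F$. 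Each step is justified as you wrote it; in particular, norm convergence in the RKHS $\mathcal F$ implies pointwise convergence, so the evaluation at $w_0$ is legitimate, and the estimate $|\sigma'_0(w)|\asymp\|k_w\|/|w|$ follows from \eqref{dop71} exactly as you indicate. Your final paragraph correctly isolates why three zeros (and not fewer) are needed.
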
 
\bigskip

\section{Proof of Theorem~\ref{t1}}

\subsection{Construction} We start with an integer $u_1=Q\gg 1$, set $u_n=2^{n-1}u_1$, $n>1$, and define 
\begin{align}
\sigma_3(z)&=\frac{\sigma(z)}{z(z-1)(z-2)(z-3)},\notag\\
F(z)&=\sigma_3(z)+\sum_{n\ge 1} u_n^{-1/2}(\Bbbk_{u_n}-\Bbbk_{u_n+1})\label{fo2}.
\end{align}
Clearly, $F\in\mathcal F$. For $u=u_n$, $z\in D(u,2\sqrt{u})$ we have 
\begin{multline}
F(z)e^{-\pi|z|^2/2}=u^{-1/2}\Bigl(e^{-\pi |z-u|^2/2}-
e^{-\pi|z-u-1|^2/2+i\pi \ima z}\Bigr)e^{i\pi u\ima z}\\ + O(u^{-4}),\qquad n\to\infty,\label{fo3}
\end{multline}  
uniformly in $Q$. Furthermore, $F$ is real on $\mathbb R$.  
Therefore, for sufficiently large $Q$ and for every $n\ge 1$, there exist $\beta_n\in(1/3,2/3)$ such that $F(u_n+\beta_n)=0$, $\lim_{n\to\infty}\beta_n=\frac12$.

Set 
\begin{align*}
S(z)&=\prod_{n\ge 1}\Bigl(1-\frac{z}{u_n+\beta_n}\Bigr),\\ 
\Lambda_2&=Z_F\setminus\{u_n+\beta_n\}_{n\ge 1},
\end{align*}
where $Z_F$ is the zero set of $F$. 
Choose $v_n\in\bigl(D(u_n-\sqrt{u_n},1)\cap\mathbb R\bigr) \setminus\Lambda_2$, $n\ge 1$, and set
\begin{align*}
G_1(z)&=\prod_{n\ge 1}\Bigl(1-\frac{z}{v_n}\Bigr),\\ 
\Lambda_1&=\{v_n\}_{n\ge 1}.
\end{align*}
Next, we define 
\begin{gather*}
G_2=F/S,\quad G=G_1G_2,\quad \Lambda=\Lambda_1\cup \Lambda_2,\\
g_\lambda=\frac{G}{\cdot-\lambda},\qquad \lambda\in\Lambda.
\end{gather*}
If $Q$ is sufficiently large, then $\Lambda\cap D(0,1/2)=\emptyset$. Finally, for some $d_n\in(-1,1)$ to be chosen later on we define  
$$
H=\sigma_3+\sum_{n\ge 1}d_nu_n^{-1/3}\Bbbk_{u_n}.
$$


\subsection{Four properties}
To complete the proof of our theorem it suffices to verify the following four properties:  
\begin{itemize}
\item[(1)] $\{k_{\lambda}\}_{\lambda \in \Lambda}$ is a complete minimal system,
\item[(2)] $\langle F,k_{\lambda}\rangle_\mathcal{F}=0$, $\lambda\in\Lambda_2$,
\item[(3)] $\langle g_{\lambda},H\rangle_\mathcal{F}=0$, $\lambda\in\Lambda_1$,
\item[(4)] $\langle F,H\rangle_\mathcal{F}\not=0$.
\end{itemize}
Then, by the Hahn--Banach theorem, the system 
$\{k_{\lambda}\}_{\lambda \in \Lambda_2}\cup \{g_{\lambda}\}_{\lambda \in \Lambda_1}$ 
is not complete in $\mathcal F$.


\subsection{Estimates}
We start with the following estimates on $F$ and $G_1/S$. Since $G_1$ and $S$ are lacunary canonical products, we have 
\begin{gather*}
0<C_1\le \Bigl| \frac{G_1(z)}{S(z)}\Bigr|\cdot \Bigl| \frac{z-u_n-\beta_n}{z-v_n}\Bigr| \le C_2,\qquad 
z\in D(u_n,2\sqrt{u_n}),\,n\ge 1,\\
0<C_1\le \Bigl| \frac{G_1(z)}{S(z)}\Bigr|  \le C_2,\qquad z\in\CC\setminus \cup_{n\ge 1} D(u_n,2\sqrt{u_n}),
\end{gather*}
with $C_1,C_2$ independent of $Q\gg 1$.

Formula \eqref{fo3} implies that 
$$
C_1\le |F(z)|\sqrt{u_n}e^{-\pi|z|^2/2} \le C_2,\qquad z\in D(u_n,1/3),\,n\ge 1,
$$
with $C_1,C_2$ independent of $Q\gg 1$.
Also, since the function $e^{-\pi |z|^2/2}|\Bbbk_{u_n}(z)|$ is rather small far away from $u_n$, we can  conclude from \eqref{fo2} that,
if $Q$ is sufficiently large, then   
$$
|F(z)| \ge C_3 (1+|z|)^{-4} e^{\pi |z|^2/2}, 
$$
for
$z\in \CC \setminus\bigl(\cup_{n\ge 1}D(u_n,2\sqrt{u_n})\bigcup \cup_{a\in\mathcal Z}D(a,1/10)\bigr)$  
and for some constant $C_3>0$ independent of $Q\gg 1$. 

\begin{claim}\label{cl1} Let $\lambda\in\Lambda$. Then $g_\lambda\in\mathcal F$.
\end{claim}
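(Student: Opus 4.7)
The plan is to first verify that $g_\lambda$ is entire and then to bound its Fock norm. For the former, every zero of $S$ (namely each $u_n+\beta_n$) is also a zero of $F$, hence $G_2:=F/S$ is entire; consequently $G=G_1G_2$ is entire with simple zeros exactly at $\Lambda=\Lambda_1\cup\Lambda_2$, so $g_\lambda=G/(\cdot-\lambda)$ is entire for each $\lambda\in\Lambda$.

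For the norm bound, I would split $\mathbb C=\Omega\cup\bigcup_n D_n$, where $D_n=D(u_n,2\sqrt{u_n})$ and $\Omega=\mathbb C\setminus\bigcup_n D_n$. On $\Omega$ the estimate $|G_1/S|\asymp 1$ gives $|g_\lambda(z)|\lesssim|F(z)|/|z-\lambda|$, which contributes $\lesssim\|F\|_{\mathcal F}^2$ on $\{|z-\lambda|\ge 1\}\cap\Omega$, plus a finite piece near $\lambda$ where $g_\lambda$ is continuous. For each $D_n$ with $u_n\ge 4(|\lambda|+1)$ one has $|z-\lambda|\ge u_n/4$ throughout $D_n$, so
\[
\int_{D_n}|g_\lambda|^2 e^{-\pi|z|^2}\,dm_2 \;\le\; \frac{16}{u_n^2}\int_{D_n}|G|^2 e^{-\pi|z|^2}\,dm_2,
\]
and since $u_n$ grows geometrically, the resulting series converges provided the right-hand side integral is bounded by a constant independent of $n$. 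The finitely many remaining $n$ contribute bounded integrals because $g_\lambda$ is continuous on each $\overline{D_n}$.

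The technical heart is therefore to show $\int_{D_n}|G|^2 e^{-\pi|z|^2}\,dm_2\le C$ uniformly in $n$. Using $|G_1/S|\lesssim|z-v_n|/|z-u_n-\beta_n|$ together with $|z-v_n|\lesssim\sqrt{u_n}$, this reduces to the pointwise bound
\[
\bigl|F(z)/(z-u_n-\beta_n)\bigr|e^{-\pi|z|^2/2}\;\lesssim\; u_n^{-1/2}\bigl(e^{-\pi|z-u_n|^2/2}+e^{-\pi|z-u_n-1|^2/2}\bigr)+u_n^{-7/2},\qquad z\in D_n.
\]
I would establish this by decomposing $F=A_n+E_n$ with $A_n=u_n^{-1/2}(\Bbbk_{u_n}-\Bbbk_{u_n+1})$ and $|E_n|e^{-\pi|z|^2/2}\lesssim u_n^{-4}$ on $D_n$ (from \eqref{fo3}), and exploiting the explicit factorization $A_n(z)=u_n^{-1/2}\Bbbk_{u_n}(z)(1-e^{\pi(z-u_n-1/2)})$. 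The elementary bound on $(1-e^{\pi w})/w$ combined with $|\Bbbk_{u_n}(z)|e^{-\pi|z|^2/2}=e^{-\pi|z-u_n|^2/2}$ controls $A_n(z)/(z-u_n-1/2)$ in Fock norm. The shift from the zero at $u_n+1/2$ to the true zero at $u_n+\beta_n$ is benign because $|\beta_n-1/2|=O(u_n^{-7/2})$, which is forced by $F(u_n+\beta_n)=0$ together with the smallness of $E_n$. Squaring and integrating, using $\int e^{-\pi|z-u_n|^2}\,dm_2=1$, produces the required uniform bound.

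The main obstacle is precisely this pointwise Fock-weighted estimate for $F(z)/(z-u_n-\beta_n)$: a generic Cauchy- or maximum-modulus argument loses too much because the weight $e^{-\pi|z|^2/2}$ varies by factors of order $e^{\pi u_n}$ over distances of order one inside $D_n$, and one must exploit the specific exponential structure of $A_n$ (and the superpolynomial proximity of $\beta_n$ to $1/2$) to match the sharp variation of the Fock weight.
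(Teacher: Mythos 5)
Your argument is correct, but it takes a genuinely different and more computational route than the paper. The paper never proves a pointwise bound for $F(z)/(z-u_n-\beta_n)$: it observes that the only problematic region is $\bigcup_{n}D(u_n+\beta_n,1)$ (outside it, $|G_1(z)/S(z)|\lesssim(1+|z|)^{1/2}$ and $|g_\lambda(z)|\lesssim(1+|z|)^{-1/2}|F(z)|$, which is already square-integrable against the Gaussian weight), and on those unit disks it uses that $e^{-\pi|z|^2}\asymp|e^{-\pi z^2}|$ near the real axis, so that $|g_\lambda(z)e^{-\pi z^2/2}|^2$ is subharmonic and its integral over $D(u_n+\beta_n,1)$ is dominated by its integral over the annulus $D(u_n+\beta_n,2)\setminus D(u_n+\beta_n,1)$, where the good bound applies; this disposes of the zero/pole cancellation at $u_n+\beta_n$ in one stroke. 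Your route instead extracts the explicit exponential structure of $A_n=u_n^{-1/2}(\Bbbk_{u_n}-\Bbbk_{u_n+1})$ and the quantitative estimate $|\beta_n-1/2|=O(u_n^{-7/2})$; the computations you indicate do check out (the factorization $A_n=u_n^{-1/2}\Bbbk_{u_n}(1-e^{\pi(\cdot-u_n-1/2)})$, the identity $|\Bbbk_{u_n}(z)e^{\pi(z-u_n-1/2)}|e^{-\pi|z|^2/2}=e^{-\pi|z-u_n-1|^2/2}$, and the final arithmetic giving a bound uniform in $n$). The one step you should make explicit is the immediate vicinity of $u_n+\beta_n$: the two terms $A_n/(\cdot-u_n-\beta_n)$ and $E_n/(\cdot-u_n-\beta_n)$ each have a pole there which cancels only in their sum, so on a disk of radius about $u_n^{-7/2}$ around $u_n+\beta_n$ your term-by-term bounds fail and you need a removable-singularity or maximum-principle argument for the holomorphic function $F(z)e^{-\pi z^2/2}/(z-u_n-\beta_n)$ --- which, since $e^{-\pi|z|^2/2}\asymp|e^{-\pi z^2/2}|$ there, is exactly the paper's trick in miniature. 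What your approach buys is a sharp pointwise Gaussian-decay profile for $G$ on all of $D(u_n,2\sqrt{u_n})$, more information than the claim needs; what the paper's approach buys is brevity and robustness, since subharmonicity absorbs the cancellation without any knowledge of where exactly $\beta_n$ sits.
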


\begin{proof} Our estimates on $F$ and $G_1/S$ imply that 
$$
\bigg|\frac{G_1(z)}{S(z)}\bigg| \lesssim (|z|+1)^{1/2}, \qquad 
|g_\lambda(z)| \lesssim (|z|+1)^{-1/2}|F(z)|,
$$ 
for $z\in \CC\setminus \cup_{n\ge 1} D(u_n+\beta_n,1)$. 
It remains to estimate the integrals over $\cup_{n\ge 1} D(u_n+\beta_n,1)$. Note that
$$
e^{-\pi |z|^2} \asymp |e^{-\pi z^2}|, \qquad z\in \cup_{n\ge 1} D(u_n+\beta_n,2).
$$ 
Then, by the mean value theorem, we have
\begin{gather*}
\int_{\CC} |g_\lambda(z)|^2 e^{-\pi|z|^2}\,dm_2(z) \\ \lesssim \|F\|^2_{\mathcal{F}} +
\sum_{n\ge 1} \int_{D(u_n+\beta_n,1)} |g_\lambda(z)|^2 e^{-\pi|z|^2}\,dm_2(z) \\
\lesssim 
\|F\|^2_{\mathcal{F}} +
\sum_{n\ge 1} \int_{D(u_n+\beta_n,1)} 
\Bigl|\frac{F(z)G_1(z)/S(z)}{z-\lambda}e^{-\pi z^2/2} \Bigr|^2\,dm_2(z) 
\\ \lesssim
\|F\|^2_{\mathcal{F}} +
\sum_{n\ge 1} \int_{D(u_n+\beta_n,2) \setminus D(u_n+\beta_n,1)} 
\Bigl|\frac{F(z)G_1(z)/S(z)}{z-\lambda}e^{-\pi z^2/2} \Bigr|^2\,dm_2(z) 
\\ 
\lesssim  \|F\|^2_{\mathcal{F}} +
\sum_{n\ge 1} \int_{D(u_n+\beta_n,2) \setminus D(u_n+\beta_n,1)}
|F(z)|^2 e^{-\pi |z|^2}\,dm_2(z)\lesssim  \|F\|^2_{\mathcal{F}}.  
\end{gather*}    
\end{proof}

\begin{claim}\label{cl2} For some $C>0$ independent of $Q\gg 1$ we have
$$
|\langle \sigma_3,g_\lambda\rangle_\mathcal{F}|\le \frac{C}{|\lambda|},\qquad \lambda\in\Lambda.
$$
\end{claim}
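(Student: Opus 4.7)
\noindent The plan is to estimate the integral $\langle\sigma_3,g_\lambda\rangle_\mathcal{F}=\int_\mathbb{C}\sigma_3(z)\overline{G(z)/(z-\lambda)}\,e^{-\pi|z|^2}\,dm_2(z)$ by splitting $\mathbb{C}$ into a tame region $\Omega_0 := \mathbb{C}\setminus\bigcup_{n\ge 1}D(u_n+\beta_n,1)$ and the small discs $D_n := D(u_n+\beta_n,1)$, the latter being where $G_1/S$ has its problematic behaviour. This mirrors the region decomposition used in the proof of Claim~\ref{cl1}.

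On $\Omega_0$ one has $|G(z)|e^{-\pi|z|^2/2}\lesssim 1$ with an absolute constant: on $D(u_n,2\sqrt{u_n})\setminus D_n$, the condition $|z-u_n-\beta_n|\ge 1$ gives $|G_1/S|\lesssim\sqrt{u_n}$, while \eqref{fo3} gives $|F|e^{-\pi|z|^2/2}\lesssim u_n^{-1/2}$, so the product is $\lesssim 1$; outside $\bigcup_n D(u_n,2\sqrt{u_n})$ both factors are bounded. Combining with $|\sigma_3(z)|e^{-\pi|z|^2/2}\lesssim (1+|z|)^{-4}$ one gets $|\sigma_3(z)G(z)|e^{-\pi|z|^2}\lesssim (1+|z|)^{-4}$ on $\Omega_0$. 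Splitting $\int_{\Omega_0}(1+|z|)^{-4}|z-\lambda|^{-1}\,dm_2(z)$ according to whether $|z-\lambda|\ge|\lambda|/2$ (where $|z-\lambda|^{-1}\le 2/|\lambda|$ pulls out of the integral) or $|z-\lambda|<|\lambda|/2$ (where $|z|\asymp|\lambda|$ so $(1+|z|)^{-4}\lesssim|\lambda|^{-4}$ and the remaining integral is $\lesssim|\lambda|$) yields $\int_{\Omega_0}|\sigma_3 g_\lambda|e^{-\pi|z|^2}\,dm_2\lesssim 1/|\lambda|$.

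For each disc $D_n$ I would apply Cauchy--Schwarz. The pointwise bound $|\sigma_3(z)|e^{-\pi|z|^2/2}\lesssim u_n^{-4}$ on $D_n$ yields $\bigl(\int_{D_n}|\sigma_3|^2 e^{-\pi|z|^2}\bigr)^{1/2}\lesssim u_n^{-4}$. The condition $F(u_n+\beta_n)=0$ makes $F/S$ entire, so $g_\lambda(z)e^{-\pi z^2/2}$ is entire and $|g_\lambda e^{-\pi z^2/2}|^2$ is subharmonic; since $e^{-\pi\rea(z^2)}\asymp e^{-\pi|z|^2}$ on $D(u_n+\beta_n,2)$, monotonicity of circular means gives $\int_{D_n}|g_\lambda|^2 e^{-\pi|z|^2}\,dm_2\lesssim\int_{A_n}|g_\lambda|^2 e^{-\pi|z|^2}\,dm_2$ on the annulus $A_n:=D(u_n+\beta_n,2)\setminus D_n$. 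On $A_n$, combining $|G_1/S|\lesssim\sqrt{u_n}$ with $|F|e^{-\pi|z|^2/2}\lesssim u_n^{-1/2}$ yields $|g_\lambda(z)|^2 e^{-\pi|z|^2}\lesssim |z-\lambda|^{-2}$, so when $|\lambda-u_n-\beta_n|\ge 3$ the integral is $\lesssim|u_n-\lambda|^{-2}$ and Cauchy--Schwarz gives $\int_{D_n}|\sigma_3 g_\lambda|e^{-\pi|z|^2}\,dm_2\lesssim u_n^{-4}/|u_n-\lambda|$. The lacunarity $u_n=2^{n-1}u_1$ then allows one to sum $\sum_n u_n^{-4}/|u_n-\lambda|\lesssim 1/|\lambda|$ for every $\lambda\in\Lambda$. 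For the at most one (by lacunarity) index $n_0$ with $\lambda$ close to $u_{n_0}+\beta_{n_0}$, the trivial bound $\int_{D_{n_0}}|g_\lambda|^2 e^{-\pi|z|^2}\le\|g_\lambda\|_\mathcal{F}^2\lesssim 1$ from Claim~\ref{cl1} gives $\int_{D_{n_0}}|\sigma_3 g_\lambda|\,e^{-\pi|z|^2}\lesssim u_{n_0}^{-4}\lesssim 1/|\lambda|$ since $|\lambda|\asymp u_{n_0}$.

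The main technical point is to couple the subharmonic mean-value transfer (which tames the pole-like growth of $G_1/S$ inside $D_n$) with the lacunarity of $\{u_n\}$ so that the disc contributions sum to $O(1/|\lambda|)$, with all implicit constants independent of $Q\gg 1$.
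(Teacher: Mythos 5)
Your argument is correct, and the constants you track are indeed uniform in $Q$, but it takes a genuinely different route from the paper. The paper's proof is much shorter: it picks a zero $\lambda_0\in(4,5)$ of $G$, notes that $g_{\lambda_0}\in\mathcal F$ (Claim~\ref{cl1}) together with the pointwise Fock-space estimate $|f(z)|\le\|f\|_{\mathcal F}e^{\pi|z|^2/2}$ yields the single global bound $|G(z)|\le C(1+|z|)e^{\pi|z|^2/2}$, and then combines this with $|\sigma_3(z)|\lesssim(1+|z|)^{-4}e^{\pi|z|^2/2}$ in a three-region split centered at $\lambda$ (a small disc $D(\lambda,1/8)$ handled by the mean-value trick, the annulus $D(\lambda,|\lambda|/2)\setminus D(\lambda,1/8)$, and the exterior, where the factor $1/(z-\lambda_0)$ is reinserted to gain integrability). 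In other words, the potential blow-up of $G_1/S$ near the points $u_n+\beta_n$ is absorbed once and for all by the global pointwise bound on $G$, so no disc-by-disc analysis is needed. You instead decompose according to the problem discs $D(u_n+\beta_n,1)$, re-derive local control of $g_\lambda$ there via the subharmonicity/annulus transfer (the same device the paper uses in Claim~\ref{cl1}), and then exploit the lacunarity of $(u_n)$ to sum the contributions $u_n^{-4}/|u_n-\lambda|$ to $O(1/|\lambda|)$. What your approach buys is self-containedness at the level of the explicit estimates on $F$ and $G_1/S$ (you never need the reproducing-kernel pointwise bound for $g_{\lambda_0}$); what it costs is the extra bookkeeping of the lacunary summation and the separate treatment of the exceptional index $n_0$ with $\lambda$ near $u_{n_0}+\beta_{n_0}$, which the paper's global bound renders unnecessary. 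One small point worth making explicit in your write-up: the uniform bound $\|g_\lambda\|_{\mathcal F}\lesssim 1$ that you invoke for the exceptional disc is not literally the statement of Claim~\ref{cl1} but is what its proof establishes (namely $\|g_\lambda\|^2_{\mathcal F}\lesssim\|F\|^2_{\mathcal F}$ with constants independent of $\lambda$ and $Q$); the paper's own proof of the present claim relies on the same uniformity.
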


\begin{proof} For sufficiently large $Q$, both functions $F$ and $G$ have a zero 
$\lambda_0$ in the interval $(4,5)$. 
By Claim~\ref{cl1}, we have $g_{\lambda_0}\in\mathcal F$ and hence 
$$
 |G(z)|\le C(1+|z|)e^{\pi |z|^2/2},\qquad z\in\CC,
$$
with $C$ independent of $Q\gg 1$. 
Let $\lambda\in\Lambda$. Since $\Lambda\cap D(0,1/2)=\emptyset$, we have $|\lambda|\ge 1/2$. 
Then 
\begin{gather*}
   \int_{\CC}|\sigma_3(z)|\Bigl| \frac{G(z)}{z-\lambda}\Bigr| e^{-\pi|z|^2}\,dm_2(z)  
   \lesssim \int_{\CC}\Bigl|\frac{G(z)}{z-\lambda}\Bigr|
   \frac{e^{-\pi |z|^2/2}}{1+|z|^4}\,dm_2(z)  \\
   =  \int_{D(\lambda,1/8)}\ldots+\int_{D(\lambda,|\lambda|/2)\setminus D(\lambda,1/8)}\ldots+
\int_{\CC\setminus D(\lambda,|\lambda|/2)}\ldots \\
  \lesssim |\lambda|^{-1}\int_{D(\lambda,|\lambda|/2)
   \setminus D(\lambda,1/8)}\Bigl|\frac{G(z)}{z}\Bigr|\frac{e^{-\pi |z|^2/2}}{|z|^2}\,dm_2(z)  \\
   + |\lambda|^{-1}\int_{\CC\setminus D(\lambda,|\lambda|/2)}\Bigl|\frac{G(z)}{z-\lambda_0}\Bigr|\frac{e^{-\pi |z|^2/2}}{1+|z|^3}\,dm_2(z)
  \lesssim \frac{1}{|\lambda|}\,.
\end{gather*} 
Here we use again the fact that, by the mean value theorem, $\int_{D(\lambda,1/8)}\ldots 
\lesssim \int_{D(\lambda,1/4) \setminus D(\lambda,1/8)} \ldots$\,\,.
\end{proof}
\medskip


\subsection{Proof of properties (1)--(4)}

\subsection*{(1)} Let $f$ be an entire function such that $fG\in\mathcal F$. Our estimates on $F$ and $G_1/S$ imply that 
$$
|f(z)|\lesssim 1+|z|^4,\quad z\in\CC
\setminus\bigl(\cup_{n\ge 1}D(u_n,2\sqrt{u_n})\bigcup \cup_{a\in\mathcal Z}D(a,1/10)\bigr),
$$
and hence, by the maximum principle and the Liouville theorem, $f$ is a polynomial of degree at most $4$. Since
$$
|G(z)|\gtrsim e^{\pi |z|^2/2},\qquad z\in \cup_{n\ge1}D(u_n,1/3),
$$
we obtain that  $f=0$. 

Finally, by Claim~\ref{cl1}, $g_\lambda\in\mathcal F$ for every $\lambda\in\Lambda$.

Thus, we have verified that $G$ is the generating function of a complete minimal system $\{k_{\lambda}\}_{\lambda \in \Lambda}$.

\subsection*{(2)} Since $\Lambda_2\subset Z_F$, we have 
$$
\langle F,k_{\lambda}\rangle_\mathcal{F}=0, \qquad \lambda\in\Lambda_2.
$$

\subsection*{(3)} Now we are going to choose $d_n\in(-1,1)$ such that 
\begin{equation}
\langle g_{v_n},H\rangle_\mathcal{F}=0, \qquad n\ge 1. \label{fo1}
\end{equation}
We can rewrite these relations as
\begin{multline}
d_nu_n^{-1/3}\langle g_{v_n}, \Bbbk_{u_n} \rangle_\mathcal{F}
\\=-\langle g_{v_n},\sigma_3\rangle_\mathcal{F}-\sum_{m\not=n}d_mu_m^{-1/3}\langle g_{v_n}, \Bbbk_{u_m} \rangle_\mathcal{F},\qquad n\ge 1.\label{dopfo}
\end{multline}
We have
\begin{gather*}
|\langle g_{v_n}, \Bbbk_{u_n} \rangle_\mathcal{F}|=\frac{|g_{v_n}(u_n)|}{\|{k}_{u_n}\|}=\Bigl|\frac{F(u_n)G_1(u_n)}{(u_n-v_n)S(u_n)\|{k}_{u_n}\|}\Bigr|
\end{gather*}
and hence,
$$
0<C_1\le u_n^{1/2}|\langle g_{v_n}, \Bbbk_{u_n} \rangle_\mathcal{F}|\le C_2,\qquad n\ge 1.
$$
Next,
\begin{gather*}
|\langle g_{v_n}, \Bbbk_{u_m} \rangle_\mathcal{F}|=\frac{|g_{v_n}(u_m)|}{\|{k}_{u_m}\|}=\Bigl|\frac{F(u_m)G_1(u_m)}{(u_m-v_n)S(u_m)\|{k}_{u_m}\|}\Bigr|
\end{gather*}
and hence,
$$
|\langle g_{v_n}, \Bbbk_{u_m} \rangle_\mathcal{F}|\le \frac{C}{\max(u_m,u_n)},\qquad n,m\ge 1,\,n\not=m.
$$
Furthermore, by Claim~\ref{cl2},
$$
|\langle g_{v_n},\sigma_3\rangle_\mathcal{F}|\le \frac{C}{u_n},\qquad n\ge 1,
$$
with $C,C_1,C_2$ independent of $Q\gg 1$. 
Thus, we can write equalities \eqref{dopfo} as 
$$
\Delta \Xi=\Gamma,
$$
where $\Delta=(d_n)_{n\ge 1}$, $\Gamma=(\gamma_n)_{n\ge 1}$, and $\Xi=(\xi_{mn})_{m,n\ge 1}$ and
\begin{gather*}
|\gamma_n|\le Cu_n^{-1/6},\qquad n\ge 1,\\
\xi_{nn}=1,\qquad n\ge 1,\\
|\xi_{mn}|\le \frac{C}{\max(u_m,u_n)^{1/6}},\qquad n,m\ge 1,\,n\not=m,
\end{gather*}
with $C$ independent of $Q\gg 1$. Therefore, for sufficiently large $Q$ we can find $d_n\in(-1,1)$, $n\ge 1$, such that $H$ satisfies \eqref{fo1}.

\subsection*{(4)} We use that 
$$
\|F-\sigma_3\|_{\mathcal F}+\|H-\sigma_3\|_{\mathcal F}\le C Q^{-1/3}
$$
for some absolute constant $C$.
Therefore, if $Q$ is large enough, then $\langle F,H\rangle_\mathcal{F}\not=0$. \hfill \qed
\bigskip

Theorem \ref{t1} admits a reformulation in terms of weighted polynomial 
approximation in the Fock space (related to the so called Newman--Shapiro problem); it may be understood as the 
failure of a certain version of spectral synthesis in $\mathcal{F}$.  
Given a function $\phi \in \mathcal{F}$,
let us denote by $\mathcal{R}_\phi$ the subspace of $\mathcal{F}$ 
defined by  
$$
\mathcal{R}_\phi = \{f\phi\in\mathcal{F}: f\in{\rm Hol}(\mathbb C)\}.
$$
Thus, $\mathcal{R}_\phi$ is the (closed) subspace in $\mathcal{F}$ 
which consists of functions in $\mathcal{F}$ 
vanishing at the zeros of $\phi$ with appropriate multiplicities. 
Let $\mathcal{P}$ denote the set of all polynomials.

\begin{corollary}
\label{myex}
There exists $\phi\in \mathcal{F}$ such that $z^n \phi \in \mathcal{F}$ for any 
$n\ge 1$ and
$$
{\rm Clos}_{\mathcal F} \{p\phi: p\in \mathcal P\} \ne \mathcal{R}_\phi. 
$$  
\end{corollary}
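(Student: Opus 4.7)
The plan is to take $\phi := G_2 = F/S$, using the entire function $G_2$ and the lacunary canonical products $G_1,S$ constructed in the proof of Theorem~\ref{t1}. The core idea is that the orthogonal complement of $\{p\phi : p\in\mathcal{P}\}$ inside $\mathcal{R}_\phi$ coincides, as a subspace of $\mathcal F$, with the orthogonal complement of the mixed system $\{k_\lambda\}_{\lambda\in\Lambda_2}\cup\{g_\lambda\}_{\lambda\in\Lambda_1}$, which is nontrivial by Theorem~\ref{t1}.

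First I would verify that $z^n\phi\in\mathcal F$ for every $n\ge 1$. From Claim~\ref{cl2} one has $|G(z)|\lesssim (1+|z|)\,e^{\pi|z|^2/2}$, hence $|\phi(z)|^2 e^{-\pi|z|^2}\lesssim (1+|z|)^2/|G_1(z)|^2$. Since $G_1$ is a lacunary canonical product with zeros at $v_k\asymp 2^k$, on the complement of a small neighborhood of its zero set one has $\log|G_1(z)|\gtrsim c\log^2|z|$; near the $v_k$ the argument from the proof of Claim~\ref{cl1} shows that the corresponding singularity is harmless in the $L^2$-sense. It follows that $|z|^{2n}|\phi(z)|^2 e^{-\pi|z|^2}$ is integrable for every $n\ge 0$.

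Next, Theorem~\ref{t1} provides a nonzero $h\in\mathcal F$ with $h(\lambda)=0$ for all $\lambda\in\Lambda_2$ (so that $h\in\mathcal{R}_\phi$) and $\langle h,g_{v_k}\rangle=0$ for every $k\ge 1$. To conclude that $h\perp p\phi$ for every polynomial $p$, I would use the Mittag--Leffler partial fraction expansion
$$
\frac{p(z)}{G_1(z)}=\sum_{k\ge 1}\frac{p(v_k)}{G_1'(v_k)(z-v_k)},
$$
which holds because $G_1$ is of order zero and $p/G_1\to 0$ at infinity, ruling out any entire correction. Multiplying by $G=G_1 G_2$ gives the pointwise identity
$$
p(z)\phi(z)=\sum_{k\ge 1}\frac{p(v_k)}{G_1'(v_k)}\,g_{v_k}(z).
$$
The coefficients decay super-exponentially: one has $|G_1'(v_k)|\gtrsim 2^{ck^2}$ for the lacunary sequence $v_k\asymp 2^k$, while $\|g_{v_k}\|_{\mathcal F}$ grows at most polynomially in $v_k$ by an estimate parallel to Claim~\ref{cl1}. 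Hence the series converges absolutely in $\mathcal F$, and
$$
\langle h,p\phi\rangle=\sum_{k\ge 1}\overline{\frac{p(v_k)}{G_1'(v_k)}}\,\langle h,g_{v_k}\rangle=0.
$$
Thus $h$ is a nonzero element of $\mathcal{R}_\phi$ orthogonal to $\{p\phi:p\in\mathcal{P}\}$, which yields the desired strict inclusion.

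The main technical obstacle I expect is the $\mathcal F$-norm convergence of the partial fraction series: it requires combining the sharp lower bound $|G_1'(v_k)|\gtrsim 2^{ck^2}$ with a uniform polynomial upper bound on $\|g_{v_k}\|_{\mathcal F}$ obtained via an argument in the style of Claim~\ref{cl1}. A subtler point is to verify that the Mittag--Leffler decomposition of $p/G_1$ carries no entire or polynomial part, which follows from the superpolynomial growth of $G_1$ and Liouville's theorem but still calls for explicit justification.
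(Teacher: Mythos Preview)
Your proposal is correct and follows the same overall strategy as the paper: take $\phi=G_2$, let $H$ be the nonzero vector orthogonal to the mixed system from Theorem~\ref{t1}, observe $H\in\mathcal R_\phi$, and show $H\perp p\phi$ by reducing to the known relations $H\perp g_{v_k}$.

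The only difference is in how that last reduction is carried out. You expand $p/G_1$ by Mittag--Leffler and argue that the resulting series $p\phi=\sum_k \dfrac{p(v_k)}{G_1'(v_k)}\,g_{v_k}$ converges in $\mathcal F$-norm, which needs the sharp lower bound $|G_1'(v_k)|\gtrsim 2^{ck^2}$ together with a polynomial upper bound on $\|g_{v_k}\|_{\mathcal F}$. The paper instead works with the finite truncations $p_k=\prod_{m\le k}(1-z/v_m)$: for $k>n$ one has $z^nG/p_k\in\Span\{g_{v_1},\dots,g_{v_k}\}$, hence $H\perp z^nG/p_k$, and then a dominated-convergence argument (uniform tail smallness plus $G_1/p_k\to 1$ locally uniformly) passes to the limit. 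The paper's variant is slightly softer, since it avoids the quantitative estimates on $G_1'(v_k)$ and on $\|g_{v_k}\|_{\mathcal F}$; your variant is more explicit and makes the expansion of $p\phi$ in the biorthogonal system transparent. Both are valid.
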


\begin{proof}
Let $G=G_1G_2$ and let $\Lambda$ be the disjoint union of $\Lambda_1$ and $\Lambda_2$ as in the proof
of Theorem \ref{t1}. Denote by $H$ an element of $\mathcal F\setminus\{0\}$ 
orthogonal to the mixed system
$\{k_\lambda\}_{\lambda\in\Lambda_2}\cup\{g_\lambda\}_{\lambda\in\Lambda_1}$. 
Put $\phi=G_2$. Then $H \in \mathcal{R}_\phi$ 
and $H\perp g_\lambda$, $\lambda\in \Lambda_1$. Clearly, $z^n \phi\in\mathcal F$, $n\ge 0$. 
Let us show that $H\perp z^n \phi$, $n\ge 0$. Set $p_k(z) = \prod_{m=1}^k (1-z/v_m)$, where  
$v_m$ are the zeros of $G_1$. Then
$$
H\perp \frac{z^n G}{p_k} = \frac{z^n G_1\phi}{p_k}, \qquad k>n\ge 0.
$$
Since $G_1$ is a lacunary canonical product, it is easy to see that for every 
$\varepsilon>0$ and integer $n\ge 0$ there exists $R>0$ 
such that
$$
\int_{|z|>R} \Big|\frac{z^nG_1(z)\phi(z)\overline{H(z)}}{p_k(z)}\Big|
e^{-\pi|z|^2}\,dm_2(z) \le \varepsilon,\qquad k>n. 
$$
Since $G_1/p_k$ converges to 1 uniformly on compact sets,
we conclude that $H\perp z^n \phi$, $n\ge 0$.
\end{proof}    
\bigskip

\section{Proof of Theorem~\ref{codim}}\label{se3}

Put $d\nu(z)=e^{-\pi|z|^2}dm_2(z)$.

\begin{lemma} \label{dople} Let $F_1, F_2\in\mathcal{F}$. Define 
$$
a_w=\frac{F_2(w)}{\|k_w\|}, \qquad b_w=\Bigl\langle \frac{\|k_w\|}{\sigma'_0(w)}\cdot\frac{\sigma_0}{\cdot-w},F_1\Bigr\rangle_\mathcal{F},\qquad w\in \mathcal Z_0.
$$
Then for every $z,\mu\in\CC\setminus\mathcal Z_0$ such that 
$F_2(\mu)=0$ we have 
\begin{multline}
\sum_{w\in \mathcal Z_0}a_wb_w\Bigl[\frac{1}{z-w}+\frac{1}{w-\mu}\Bigr]\label{maineq} \\
=\int_{\mathbb{C}}\frac{\overline{F_1(\xi)}F_2(\xi)}{z-\xi}\,d\nu(\xi)-
\frac{F_2(z)}{\sigma_0(z)}\int_{\mathbb{C}}\frac{\overline{F_1(\xi)}\sigma_0(\xi)}{z-\xi}\,d\nu(\xi)+\Bigl\langle \frac{F_2}{\cdot-\mu},F_1\Bigr\rangle_{\mathcal{F}}.
\end{multline}
\end{lemma}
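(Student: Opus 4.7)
The plan is to bring both sides of the identity into a common integral form, at which point the lemma reduces to a Parseval-type identity that I would derive from Lemma~\ref{l3}. I begin on the left-hand side with the partial-fraction identity
$$\frac{1}{z-w}+\frac{1}{w-\mu}=\frac{z-\mu}{(z-w)(w-\mu)},$$
which combines the two bracketed sums into $(z-\mu)\sum_w a_wb_w/[(z-w)(w-\mu)]$. Since $F_2(\mu)=0$, the function $q(\xi):=F_2(\xi)/(\xi-\mu)$ is entire and easily seen to lie in $\mathcal F$, and substituting $a_w=(w-\mu)q(w)/\|k_w\|$ cancels the $(w-\mu)$ factor, so the LHS becomes $(z-\mu)\sum_{w\in\mathcal Z_0} q(w)b_w/[(z-w)\|k_w\|]$.

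Next, I would simplify the RHS. Writing $F_2=(\cdot-\mu)q$ in the first integral and using the algebraic identity $(\xi-\mu)/(z-\xi)=-1+(z-\mu)/(z-\xi)$ gives
$$\int\frac{\overline{F_1(\xi)}F_2(\xi)}{z-\xi}\,d\nu(\xi) = -\langle q,F_1\rangle_{\mathcal F} + (z-\mu)\int\frac{\overline{F_1(\xi)}q(\xi)}{z-\xi}\,d\nu(\xi),$$
and the first term is exactly $-\langle F_2/(\cdot-\mu),F_1\rangle$, so it cancels the last term of the RHS. Plugging in $F_2(z)=(z-\mu)q(z)$ in the $\sigma_0$-integral, the whole RHS collapses to
$$(z-\mu)\int\overline{F_1(\xi)}\,R(\xi)\,d\nu(\xi),\qquad R(\xi):=\frac{1}{z-\xi}\Bigl[q(\xi)-\frac{q(z)\sigma_0(\xi)}{\sigma_0(z)}\Bigr].$$
The bracket vanishes at $\xi=z$, so $R$ is entire; the growth estimate \eqref{dop71} together with $q\in\mathcal F$ shows $R\in\mathcal F$.

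After cancelling $(z-\mu)$, the remaining task is the Parseval-type identity
$$\sum_{w\in\mathcal Z_0}\frac{R(w)\,b_w}{\|k_w\|}=\langle R,F_1\rangle_{\mathcal F}.$$
Since $\sigma_0(w)=0$ on $\mathcal Z_0$, one has $R(w)=q(w)/(z-w)$, matching the sum from Step~1. Absolute convergence is automatic: $|R(w)|/\|k_w\|\in\ell^2(\mathcal Z_0)$ because $R\in\mathcal F$, while $|b_w|\lesssim\log^{1/2}(1+|w|)$ by Lemma~\ref{cl3}, and Cauchy--Schwarz with the $(z-w)^{-1}$ decay closes the estimate.

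The hardest part is justifying the Parseval identity itself, because the formal Lagrange expansion of a generic $R\in\mathcal F$ with nodes $\mathcal Z_0$ does not converge in $\mathcal F$ (the lattice $\mathcal Z_0$ has critical density). My plan to overcome this is to route through Lemma~\ref{l3}: choose two auxiliary points $\lambda_1,\lambda_2\in\mathbb C\setminus\mathcal Z$ so that the function $\tilde G(\xi):=(\xi-\lambda_1)(\xi-\lambda_2)(\xi-\lambda_3)R(\xi)$ (with $\lambda_3$ a convenient zero of $R$) sits inside the three-factor framework of Lemma~\ref{l3}, apply that identity, and divide the two extra factors back out. An alternative is to proceed by Fubini directly, substituting $b_w=\int h_w\overline{F_1}\,d\nu$ with $h_w(\xi)=\|k_w\|\sigma_0(\xi)/[\sigma'_0(w)(\xi-w)]$ and verifying that the resulting inner Lagrange series integrates against $\overline{F_1}$ to $\langle R,F_1\rangle$ using the Mittag--Leffler expansion of $1/\sigma_0$. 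Once this step is secured, Steps~1 and~2 close the proof of Lemma~\ref{dople}.
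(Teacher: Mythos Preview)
Your algebraic reduction is correct and clean: after the partial-fraction trick and the substitution $F_2=(\cdot-\mu)q$, both sides indeed collapse to the single statement
\[
\sum_{w\in\mathcal Z_0}\frac{R(w)}{\|k_w\|}\,b_w=\langle R,F_1\rangle_{\mathcal F},
\qquad R(\xi)=\frac{q(\xi)\sigma_0(z)-q(z)\sigma_0(\xi)}{\sigma_0(z)(z-\xi)}.
\]
The gap is in how you propose to prove this Parseval identity. Lemma~\ref{l3} applies only when the function under the inner product is of the form $G/[(\cdot-\lambda_1)(\cdot-\lambda_2)(\cdot-\lambda_3)]$ with $G$ the generating function of a \emph{complete minimal} system of reproducing kernels whose zero set avoids $\mathcal Z$. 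Multiplying $R$ by three linear factors does not produce such a $G$: you would need $(\cdot-\lambda_2)(\cdot-\lambda_3)R\in\mathcal F$ (minimality) and simultaneously $\tilde GT\notin\mathcal F$ for every nontrivial entire $T$ (completeness), neither of which can be arranged by a choice of two or three auxiliary points. The Fubini/Mittag--Leffler alternative is left as a sketch and runs into the usual obstacle that the Lagrange series over $\mathcal Z_0$ does not converge in $\mathcal F$, so swapping sum and integral needs an argument you have not supplied.

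The paper closes this in one line by a different observation: both sides of the Parseval identity (equivalently, of \eqref{maineq}) are bounded linear functionals of $F_1\in\mathcal F$, so it suffices to test on $F_1=k_v$, $v\in\mathcal Z_0$. For such $F_1$ one has $b_w=\|k_v\|\delta_{w,v}$ (biorthogonality), the sum reduces to a single term, and the identity becomes the reproducing-kernel property $\langle R,k_v\rangle=R(v)$. Completeness of $\{k_v\}_{v\in\mathcal Z_0}$ then finishes. In short, your reduction is a valid (and arguably more transparent) reorganisation of the paper's computation, but you should replace the appeal to Lemma~\ref{l3} by this density-in-$F_1$ argument; that is exactly what the paper does, only it applies it directly to \eqref{maineq} without first isolating $R$.
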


\begin{proof}
If $F_1=k_v$, $v\in\mathcal{Z}_0$, then
\begin{multline*}
\sigma_0(z)\sum_{w\in\mathcal{Z}_0}\frac{a_wb_w}{z-w}=\frac{\sigma_0(z)F_2(v)}{z-v} \\=\int_{\mathbb{C}}\frac{\overline{k_v(\xi)}(F_2(\xi)\sigma_0(z)-\sigma_0(\xi)F_2(z))}{z-\xi}\,d\nu(\xi).
\end{multline*}
If, additionally, $F_2(\mu)=0$, then 
$$
\sum_{w\in\mathcal{Z}_0}\frac{a_wb_w}{w-\mu}=\frac{
F_2(v)}{v-\mu} =\Bigl\langle \frac{F_2}{\cdot-\mu},k_v\Bigr\rangle_\mathcal{F}.
$$

Therefore, if $F_1$ is a finite linear combination of $k_v$, $v\in\mathcal Z_0$, and 
$F_2(\mu)=0$, 
then we have 
\begin{gather*}
\sum_{w\in \mathcal{Z}_0}a_wb_w\Bigl[\frac{1}{z-w}+\frac{1}{w-\mu}\Bigr]=
\sum_{w\in \mathcal{Z}_0}\frac{a_wb_w}{z-w}+\sum_{w\in\mathcal{Z}_0}\frac{a_wb_w}{w-\mu}\\
=\frac{1}{\sigma_0(z)}\int_{\mathbb{C}}\frac{\overline{F_1(\xi)}(F_2(\xi)\sigma_0(z)-\sigma_0(\xi)F_2(z))}{z-\xi}\,d\nu(\xi)+\Bigl\langle \frac{F_2}{\cdot-\mu},F_1\Bigr\rangle_\mathcal{F}.
\end{gather*}
On the other hand, by Claim~\ref{cl3}, for every $z,\mu\in\CC\setminus\mathcal Z_0$, the left hand side and the right hand side of \eqref{maineq} are bounded linear functionals on $F_1\in\mathcal F$. 
Since the system $\{k_v\}_{v\in\mathcal Z_0}$ is complete, the assertion of the lemma follows.
\end{proof}

We say that a measurable subset of $\CC$ is thin if it is the union of a measurable set $\Omega_1$ of zero density,
$$
\lim_{R\to\infty}\frac{m_2(\Omega_1\cap D(0,R))}{R^2}=0,
$$
and a measurable set $\Omega_2$ such that 
$$
\int_{\Omega_2}\frac{dm_2(z)}{(|z|+1)^2\log(|z|+2)}<\infty.
$$
The union of two thin sets is thin, and $\CC$ is not thin.

\begin{lemma}\label{lemmaLi} Let $f$ be an entire function of finite order, bounded on $\CC\setminus \Omega$ for some thin set $\Omega$. 
Then $f$ is a constant. 
\end{lemma}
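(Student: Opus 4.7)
My plan is to let $u := \log^+|f|$, which is a non-negative subharmonic function, and to prove the stronger statement that $u$ is bounded on $\mathbb{C}$; the conclusion then follows from Liouville's theorem applied to $f$. The finite-order hypothesis gives a growth bound $u(z) \le A(1+|z|)^N$ for some $A,N>0$, and the hypothesis on $\Omega$ gives $u(z) \le C_0$ for every $z \in \mathbb{C}\setminus\Omega$, with $C_0 := \log^+\sup_{\mathbb{C}\setminus\Omega}|f|$. The goal is to propagate the bound $C_0$ from $\Omega^c$ to all of $\mathbb{C}$ by means of the subharmonic mean value inequality, using that $\Omega$ is small in the two complementary senses that make up the definition of thin.

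For any $z_0 \in \Omega$ and $r>0$, the mean value inequality gives
\[
u(z_0) \;\le\; \frac{1}{\pi r^{2}}\int_{D(z_0,r)\cap\Omega^c} u \,dm_2 \;+\; \frac{1}{\pi r^{2}}\int_{D(z_0,r)\cap\Omega} u \,dm_2 \;\le\; C_0 \;+\; \frac{A(1+|z_0|+r)^{N}}{\pi r^{2}}\, m_2\bigl(D(z_0,r)\cap\Omega\bigr).
\]
The first term is already uniformly bounded; the difficulty is to choose $r = r(z_0)$ so that the second term stays bounded as $|z_0| \to \infty$. A direct estimate fails because the polynomial factor $(1+|z_0|+r)^N$ dominates the factor $m_2(\Omega\cap D(z_0,r))/r^2$, which, by zero density of $\Omega_1$ alone, only tends to zero qualitatively without a quantitative rate. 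To circumvent this I would instead invoke the one-dimensional mean value on circles: by Fubini applied to the integral condition defining $\Omega_2$, we have
\[
\int_0^{\infty}\frac{m_1(\Omega_2\cap\partial D(0,r))}{(r+1)^{2}\log(r+2)}\,dr \;<\;\infty,
\]
while the zero-density condition for $\Omega_1$ gives $m_1(\Omega_1 \cap \partial D(0,r))=o(r)$ on a set of $r$ of full density. Combining these with another application of Fubini on translated disks $D(z_0,r)$, I would extract a sequence of radii $r_n=r_n(z_0)\to\infty$ for which the circle $\partial D(z_0,r_n)$ meets $\Omega$ in arc-length that is $o(r_n)$ at a rate rapid enough to beat the polynomial factor.

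With such radii in hand, I would apply the Poisson formula on $D(z_0,r_n)$ rather than the disk mean value, so as to profit from the logarithmic weakening of the growth estimate near the boundary. Concretely, one writes
\[
u(z_0) \;\le\; \frac{1}{2\pi r_n}\int_{\partial D(z_0,r_n)} u\,ds \;\le\; C_0 \;+\; \frac{m_1(\Omega\cap\partial D(z_0,r_n))}{2\pi r_n}\,\max_{|\zeta|\le|z_0|+r_n}u(\zeta),
\]
and the smallness of $m_1(\Omega\cap\partial D(z_0,r_n))$ chosen via Fubini absorbs the polynomial growth of $u$. Iterating, or letting $n\to\infty$, yields $u(z_0)\le C_0$, finishing the proof.

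\textbf{Main obstacle.} The hard part is to show that the two rather weak, qualitative hypotheses on $\Omega_1$ (zero planar density) and $\Omega_2$ (integral condition) combine into a quantitative statement strong enough to produce radii on which $\partial D(z_0,r_n)\cap\Omega$ has arc length $o(r_n^{1-N})$, which is what the polynomial growth factor demands. This requires careful bookkeeping: the translation from $D(0,R)$ to $D(z_0,r)$ loses efficiency when $|z_0|\gg r$, so one must choose $r$ comparable to or larger than $|z_0|$. The integral condition on $\Omega_2$ is phrased for the origin but, because $(|z|+1)^{-2}\log(|z|+2)^{-1}$ varies slowly at scale $|z|$, it transfers to shifted disks at cost of a constant. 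The finiteness of the integral then ensures that, for each $z_0$, the set of "bad" radii $r$ on which the arc-length of $\Omega_2$ is too big has finite logarithmic measure, leaving plenty of good radii. A parallel argument works for $\Omega_1$, and the union is still thin.
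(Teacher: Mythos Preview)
Your plan has a genuine gap at the quantitative step. You hope to find circles $\partial D(z_0,r_n)$ on which $m_1(\Omega\cap\partial D(z_0,r_n))=o(r_n^{1-N})$, so that the arc-length fraction kills the polynomial factor $(1+|z_0|+r_n)^N$. But the hypotheses are purely qualitative and cannot deliver such a rate. For a concrete obstruction, take
\[
\Omega=\Omega_1=\Bigl\{z:|z|>e^e,\ |\arg z|<\tfrac{1}{\log\log|z|}\Bigr\}.
\]
Then $m_2(\Omega_1\cap D(0,R))\asymp R^2/\log\log R=o(R^2)$, so $\Omega_1$ has zero density; yet \emph{every} circle $\partial D(0,r)$ meets $\Omega_1$ in arc length $\asymp r/\log\log r$, which is nowhere near $o(r^{1-N})$ for any $N>0$. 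Thus a single application of the circle mean value, no matter how cleverly the radius is chosen, cannot absorb the polynomial growth. Your fallback of ``iterating'' is more promising, but it is not worked out: one would need an inequality of the type $M(R)\le C_0+\eta(R)\,M(KR)$ with a \emph{fixed} $K$ and with $\eta(R)\to 0$ uniformly over all base points $|z_0|\le R$, and then to show that the product $\prod_j\eta(K^jR)$ beats $K^{mN}$. For $\Omega_2$ this is delicate, since the integral condition allows full annuli $\{R_k\le|z|\le 2R_k\}\subset\Omega_2$ (take $R_k=e^{k^2}$), on which the fraction equals $1$; avoiding them forces $r$ to jump scales and undermines the fixed-$K$ recursion.

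The paper takes a different, essentially one-line, route that bypasses this difficulty: instead of the \emph{linear} loss coming from circle averages, it exploits the \emph{exponential} decay of harmonic measure through narrow channels. After translating so that $u(0)>0$, one looks at the connected component $O^R$ of the open set $\{u>0\}$ inside $D(0,R)$ and sets $\psi(R)=m_1(\partial O^R\cap\partial D(0,R))$. The theorem on harmonic estimation gives $u(0)\le\omega(0,\partial O^R\cap\partial D(0,R),O^R)\cdot\max_{|z|=R}u$, and the Beurling--Carleman/Ahlfors estimate gives $\omega\le C\exp\bigl(-\pi\int_1^R ds/\psi(s)\bigr)$. Combined with the polynomial bound on $\max u$, this forces $\int_1^R ds/\psi(s)\le M\log R$. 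Cauchy--Schwarz then yields $\int_1^R \psi(s)/s^2\,ds\ge (\log R)/M$, whereas the thin hypothesis (both parts) gives $\int_1^R \psi(s)/s^2\,ds=o(\log R)$. The contradiction is immediate. The point is that the reciprocal integral $\int ds/\psi$ converts ``small arc length on average'' into exponential decay of harmonic measure, which is exactly what is needed to beat polynomial growth; your circle-average inequality only yields decay proportional to the arc-length fraction, which is too weak.
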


\begin{proof}
Suppose that $f$ is not a constant and that 
\begin{equation}
\log|f(z)|=O(|z|^N),\qquad |z| \to\infty,
\label{eq1}
\end{equation}
for some $N<\infty$.
We can find $w\in\CC$ and $c\in\mathbb R$ such that the subharmonic function $u$,
$$
u(z)=\log|f(z-w)|+c
$$
is negative on $\CC\setminus \widetilde{\Omega}$ for some open  
thin set $\widetilde{\Omega}$, and $u(0)>0$. 
Given $R>0$, consider the connected component $O^R$ of $\widetilde{\Omega}\cap D(0,R)$ containing the point $0$ and set 
$$
\psi(R)=m(\partial O^R\cap \partial D(0,R)).
$$
Furthermore, set $\Delta=\{R\ge 1: \psi(R)=2\pi R\}$, $\psi_*=\psi+\infty\cdot \chi_\Delta$.

We use some estimates on harmonic measure in order to show that $\psi(R)$ should 
be not too small for many values of $R$, a contradiction to the fact that
$\widetilde{\Omega}$ is thin. 

By the theorem on harmonic estimation \cite[VII.B.1]{Koo1}, we have
\begin{equation}
u(0)\le \omega(0,\partial O^R\cap \partial D(0,R),O^R)\cdot \max_{|z|=R}u(z),
\label{eq2}
\end{equation}
where $\omega(z,E,O)$ is the harmonic measure at $z\in O$ of $E\subset \partial O$ with respect to a domain $O$. 
By the Ahlfors--Beurling--Carleman theorem \cite[Theorem III.67]{T},
\begin{equation}\label{eq3}
\omega(0,\partial\Omega^R\cap \partial D(0,R),\Omega^R)\le C\exp\Bigl(-\pi\int_1^{R/2}\frac{ds}{\psi_*(s)}\Bigr).
\end{equation}
By \eqref{eq1}--\eqref{eq3} we conclude that for some $M<\infty$
\begin{equation}\label{eq12}
\int_1^R\frac{ds}{\psi_*(s)}\le M\log R,\qquad R>2.
\end{equation}

Next, $\widetilde{\Omega}=\Omega_1\cup\Omega_2$, where $\Omega_1$ and $\Omega_2$ 
are open and
\begin{align}\label{eqstar}
\lim_{R\to\infty}\frac{m_2(\Omega_1\cap D(0,R))}{R^2}=0,\\
\label{eqstarstar}
\int_{\Omega_2}\frac{dm_2(z)}{(|z|+1)^2\log(|z|+2)}<\infty.
\end{align}
Set 
$$
\psi_j(R)=m(\Omega_j\cap \partial D(0,R)),\qquad j=1,2.
$$
We have
\begin{equation}\label{eq71}
\psi(R)\le \psi_1(R)+\psi_2(R),\qquad R>0.
\end{equation}
By \eqref{eqstar},
\begin{equation}
\label{eq31}
\int_1^R\psi_1(s)\,ds=o(R^2),\qquad R\to\infty.
\end{equation}
By \eqref{eqstarstar},
\begin{equation}
\label{eq7}
\int_2^R\frac{\psi_2(s)}{s^2\log s}\,ds<\infty.
\end{equation}

Furthermore, 
$\Delta=\Delta_1\cup\Delta_2$, where $\Delta_1$ and $\Delta_2$ 
are open and
\begin{align}\label{eqstar1}
\lim_{R\to\infty}\frac{m(\Delta_1\cap (1,R))}{R}=0,\\
\label{eqstarstar1}
\int_{\Delta_2}\frac{ds}{s\log(s+2)}<\infty.
\end{align}

Since 
\begin{multline*}
\biggl(\int_{[1,R]\setminus\Delta}\frac{ds}{s}\biggr)^2
\le \int_{[1,R]\setminus\Delta}\frac{ds}{\psi(s)}\cdot \int_{[1,R]\setminus\Delta}\!\!\!\frac{\psi(s)}{s^2}\,ds\\ \le \int_1^R\frac{ds}{\psi_*(s)}\cdot \int_1^R\frac{\psi(s)}{s^2}\,ds,
\end{multline*}
by \eqref{eq12} and \eqref{eq71} we conclude that 
\begin{equation}
\label{eq86}
\int_1^R\frac{\psi_1(s)}{s^2}\,ds+\int_1^R\frac{\psi_2(s)}{s^2}\,ds\ge \frac1{M \log R} \cdot \biggl(\int_{[1,R]\setminus\Delta}\frac{ds}{s}\biggr)^2,\, R>2.
\end{equation}

On the other hand, by \eqref{eq31} we have 
\begin{multline}\label{con1}
\int_1^R\frac{\psi_1(s)}{s^2}\,ds\le \sum_{k=0}^{[\log R]}\int_{\exp k}^{\exp(k+1)}\frac{\psi_1(s)}{s^2}\,ds\\ \le
\sum_{k=0}^{[\log R]}e^{-2k}\int_{\exp k}^{\exp(k+1)}\psi_1(s)\,ds=o(\log R),\qquad R\to\infty.
\end{multline}
Furthermore, by \eqref{eq7} we have
\begin{multline}\label{con2}
\int_e^R\frac{\psi_2(s)}{s^2}\,ds\le \sum_{k=0}^{[\log\log R]}\int_{\exp\exp k}^{\exp\exp(k+1)}\frac{\psi_2(s)}{s^2}\,ds\\ \le
\sum_{k=0}^{[\log\log R]}e^{k+1}\int_{\exp\exp k}^{\exp\exp(k+1)}\frac{\psi_2(s)}{s^2\log s}\,ds=o(\log R),\qquad R\to\infty.
\end{multline}
Therefore, \eqref{eq86}--\eqref{con2} give us that
\begin{equation*}
\int_{[1,R]\setminus\Delta}\frac{ds}{s}=o(\log R),\qquad R\to\infty.
\end{equation*}
Hence,
$$
\int_{\Delta\cap [R,R^2]}\frac{ds}{s}=(1-o(1))\log R,\qquad R\to\infty.
$$
By \eqref{eqstar1},
$$
\int_{\Delta_2\cap [R,R^2]}\frac{ds}{s}=(1-o(1))\log R,\qquad R\to\infty.
$$
Therefore, for all sufficiently large $k$, 
$$
\int_{\Delta_2\cap[2^k,2^{2k}]}\frac{ds}{s\log(s+2)}\ge \frac Ck \int_{\Delta_2\cap[2^k,2^{2k}]}\frac{ds}{s}\ge C_1,
$$
that contradicts to \eqref{eqstarstar1}.
\end{proof}

\begin{lemma}
\label{lem5} 
In the conditions of Lemma~\ref{dople}, given $z,\mu\in\CC\setminus\mathcal Z_0$ such that 
$F_2(\mu)=0$, we have 
$$
  \sum_{w\in\mathcal{Z}_0}a_wb_w\Bigl(\frac{1}{z-w}+\frac{1}{w-\mu}\Bigr)=
  \Bigl\langle \frac{F_2}{\cdot-\mu},F_1\Bigr\rangle_\mathcal{F}+\frac{\langle F_2, F_1\rangle_\mathcal{F}}{z}+o(|z|^{-1}),
$$
as $|z|\to\infty,\,z\in\CC\setminus\Omega$, for some thin set $\Omega$. 
\end{lemma}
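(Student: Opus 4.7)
By Lemma~\ref{dople}, the left-hand side of the identity to be proved equals
\[
I_1(z)-\frac{F_2(z)}{\sigma_0(z)}I_2(z)+\Bigl\langle \frac{F_2}{\cdot-\mu}, F_1\Bigr\rangle_\mathcal F,
\]
where $I_1(z), I_2(z)$ are the two Cauchy-type integrals in~\eqref{maineq}. Since the third summand is precisely the prescribed constant term, it suffices to show
\[
I_1(z)-\frac{F_2(z)}{\sigma_0(z)}I_2(z)=\frac{\langle F_2, F_1\rangle_\mathcal F}{z}+o(|z|^{-1}),\quad |z|\to\infty,\ z\in\CC\setminus\Omega,
\]
for some thin $\Omega$.

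The combined integrand admits a convenient form:
\[
I_1(z)-\frac{F_2(z)}{\sigma_0(z)}I_2(z)=\int_\CC \overline{F_1(\xi)}\,\frac{F_2(\xi)\sigma_0(z)-F_2(z)\sigma_0(\xi)}{\sigma_0(z)(z-\xi)}\,d\nu(\xi),
\]
whose numerator vanishes at $\xi=z$, making the integrand entire in $\xi$. A direct $\bar\partial$-computation confirms that this combination is meromorphic in $z$ with only simple poles at $\mathcal Z_0$, and its residues match those of $\sum_w a_wb_w/(z-w)$, consistent with Lemma~\ref{dople}. To extract the leading $\langle F_2,F_1\rangle/z$ term, I would expand $1/(z-\xi)=1/z+\xi/(z(z-\xi))$ inside $I_1$, obtaining $I_1(z)=\langle F_2,F_1\rangle_\mathcal F/z+J(z)/z$ with $J(z):=\int \xi\overline{F_1}F_2/(z-\xi)\,d\nu$, and then exploit the estimate $|F_2(z)/\sigma_0(z)|\lesssim |z|/\dist(z,\mathcal Z)$, which follows from~\eqref{dop71}, to show that $F_2(z)I_2(z)/\sigma_0(z)=o(|z|^{-1})$ outside a thin neighbourhood of $\mathcal Z$.

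The main obstacle is the bound $J(z)=o(1)$ outside a thin set. Since $\xi\overline{F_1}F_2 e^{-\pi|\xi|^2}$ need not belong to $L^1(\CC)$ for arbitrary $F_j\in\mathcal F$, the classical moment estimates for Cauchy transforms are unavailable. The plan is to split the integration according to the size of $|\xi|$ relative to $|z|$ and $|\xi-z|$: in $\{|\xi|\le|z|^{1/2}\}$ one gains a factor $|z|^{-1/2}$ from $|\xi/(z-\xi)|\lesssim |z|^{-1/2}$; in $\{|\xi|>|z|^{1/2},|\xi-z|>1\}$ the smallness follows from the $L^1$-tail of $\overline{F_1}F_2 e^{-\pi|\cdot|^2}\in L^1(\CC)$; and in $\{|\xi-z|\le 1\}$ the contribution is bounded by $|z|$ times the local $L^\infty$-norm of the integrand, which via the sub-mean inequality is controlled by the local $L^2$-averages of the $L^2$-functions $\phi_j:=F_je^{-\pi|\cdot|^2/2}$. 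A nested construction of the exceptional set, assembled from level sets of these averages on dyadic annuli $\{R_n<|z|\le R_{n+1}\}$ with thresholds $1/(R_n\log R_n)$, yields an $\Omega$ of density zero --- hence thin by the first clause of the definition --- outside of which $J(z)=O(1/\log|z|)=o(1)$, completing the argument.
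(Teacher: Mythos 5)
Your reduction via Lemma~\ref{dople} is correct and is exactly the paper's starting point, and you are right that $\int_{\CC}|\xi|\,|F_1(\xi)F_2(\xi)|\,d\nu(\xi)$ need not be finite, so the naive moment expansion of $I_1$ requires care. However, the proposal has a genuine gap in the treatment of the second term $\frac{F_2(z)}{\sigma_0(z)}I_2(z)$. You propose to combine the pointwise bound $|F_2(z)/\sigma_0(z)|\lesssim |z|/\dist(z,\mathcal Z)$ with the Cauchy--Schwarz bound $|I_2(z)|\lesssim \log^{1/2}(|z|+2)/(|z|+1)$ coming from Lemma~\ref{cl3}. The product of these bounds is $O\bigl(\log^{1/2}|z|/\dist(z,\mathcal Z)\bigr)$, which off fixed disks $D(\zeta,\varepsilon)$, $\zeta\in\mathcal Z$, is only $O(\log^{1/2}|z|)$ --- nowhere near $o(|z|^{-1})$; and a fixed-radius neighbourhood of $\mathcal Z$ is not even thin in the paper's sense (it has positive density, and its integral against $dm_2(z)/((|z|+1)^2\log(|z|+2))$ diverges), while off shrinking disks $D(\zeta,1/|\zeta|)$ your pointwise bound on $F_2/\sigma_0$ becomes $O(|z|^2)$ and the estimate is worse. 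The sup-norm bound on $F_2$ is simply too crude here. What is actually needed, and what the paper uses, is the \emph{integrated} information that $z\mapsto F_2(z)/(z\sigma_0(z))$ belongs to $L^2\bigl(\CC\setminus\cup_{\zeta\in\mathcal Z}D(\zeta,\varepsilon)\bigr)$, followed by a Chebyshev argument: the set where $|F_2(z)\sigma_0(z)^{-1}I_2(z)|>\varepsilon/|z|$ then has finite integral against $dm_2(z)/((|z|+1)^2\log(|z|+2))$ and is therefore thin via the \emph{second} clause of the definition. Without some such integrated smallness your argument cannot close, and this is the step for which the second clause of ``thin'' exists at all.

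There is a second, smaller gap in your estimate of $J(z)$. The region $\{|\xi|>|z|^{1/2},\,1<|\xi-z|\le |z|/2\}$ is not controlled by the $L^1$-tail of $\overline{F_1}F_2\,d\nu$: there $|\xi|/|z-\xi|$ ranges up to order $|z|$, so the tail bound yields only $o(|z|)$, not $o(1)$. (Your regions $|\xi|\le|z|^{1/2}$, $|\xi-z|>|z|/2$, and $|\xi-z|\le1$ are fine.) This intermediate region could be salvaged by a weak-type maximal-function argument on dyadic annuli $2^k<|\xi-z|\le 2^{k+1}$, but that is precisely the kind of weak-$(1,1)$ input you were trying to avoid. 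The paper instead handles all of $I_1$ at once via statement \eqref{meas}, proved by splitting the finite measure $\psi=\overline{F_1}F_2\,d\nu$ into a compactly supported piece plus a piece of small total variation and invoking the Mattila--Melnikov weak-type inequality for Cauchy transforms on Lipschitz curves, integrated over radial segments; this produces a zero-density exceptional set directly and sidesteps the moment problem you identified.
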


\begin{proof}
First note that for every finite complex
measure $\psi$ on $\CC$ and $\varepsilon>0$, 
\begin{equation}
\label{meas}
\lim_{R\to\infty} R^{-2}\cdot m_2\bigg\{z\in D(0,R): 
\Bigl|\int_\mathbb{C}\frac{d\psi(\xi)}{z-\xi} - \frac{\psi(\mathbb{C})}{z} \Bigr|
\geq \frac{\varepsilon}{|z|} \bigg\} = 0.
\end{equation}
This (apparently known) fact follows, for example, from a
much more subtle result of P.~Mattila and M.~Melnikov \cite{mm} (see also \cite{ver}): 
if $\Gamma$ is a Lipschitz graph in $\CC$, $\psi$ a finite complex measure, and $\lambda>0$, 
then
$$
m_1 \bigg\{ z\in \Gamma:\, 
\sup_{\varepsilon>0} \Big| \int_{|\zeta - z|>\varepsilon} 
\frac{d\psi(\zeta)}{z-\zeta} \Big|>\lambda \bigg\} \le c_\Gamma
\frac{\|\psi\|}{|\lambda|},
$$
where $m_1$ denotes the Lebesgue measure on $\Gamma$ and
$c_\Gamma$ is a constant which depends only on the Lipschitz constant of $\Gamma$.
Applying this estimate to $\Gamma = [r e^{i\theta}, 2r e^{i\theta}]$, $r>0$, 
$\theta\in[0, 2\pi]$, 
and integrating over $\theta$ we see that 
\begin{equation}
\label{meas1}                    
m_2 \bigg\{z: r\le |z| \le 2r:\, 
\sup_{\varepsilon>0} \Big| \int_{|\zeta - z|> \varepsilon} 
\frac{d\psi(\zeta)}{z-\zeta} \Big|> \frac{\varepsilon}{r} \bigg\}\bigg| \lesssim
\frac{\|\psi\|}{\varepsilon} r^2.
\end{equation}
Now, writing $\psi = \psi_1+\psi_2$ where $\psi_1$ has compact support  
and $\|\psi_2\| \le \varepsilon^2$ and applying \eqref{meas1} to dyadic rings
$\big\{2^n\le |z| \le 2^{n+1}\}$, we easily deduce \eqref{meas}. 

By \eqref{meas},
$$
\int_{\mathbb{C}}\frac{\overline{F_1(\xi)}F_2(\xi)}{z-\xi}\,d\nu(\xi)=\frac{\langle F_2, F_1\rangle_\mathcal{F}}{z}+o(|z|^{-1}),\qquad |z|\to\infty,\,z\in\CC\setminus\Omega,
$$
for some thin set $\Omega$. 

Furthermore, by Claim~\ref{cl3}, for $z\in\mathcal Z_0$ we have
$$
\Bigl|\int_{\mathbb{C}}\frac{\overline{F_1(\xi)}\sigma_0(\xi)}{z-\xi}\,d\nu(\xi)\Bigr|
\leq\|F_1\|_{\mathcal F}\cdot
\Bigl\|\frac{\sigma_0}{\cdot-z}\Bigr\|_{\mathcal F}\lesssim\frac{\log^{1/2}(|z|+2)}{|z|+1}.
$$
It is easy to verify that this estimate extends to all $z\in\mathbb{C}$.

For every $\varepsilon>0$ the function 
$z\mapsto \frac{F_2(z)}{z\sigma_0(z)}$ belongs to $L^2\bigl(\CC\setminus \cup_{\zeta\in\mathcal Z}D(\zeta,\varepsilon)\bigr)$. 
Therefore, the set 
$$
\Omega_{\varepsilon}=\Bigl\{z\in \CC\setminus \cup_{\zeta\in\mathcal Z}D(\zeta,\varepsilon): \Bigl|\frac{F_2(z)}{\sigma_0(z)}\int_{\mathbb{C}}\frac{\overline{F_1(\xi)}\sigma_0(\xi)}{z-\xi}\,d\nu(\xi)\Bigr|>\frac{\varepsilon}{|z|}\Bigr\}
$$ 
satisfies the estimate 
$$
\int_{\Omega_\varepsilon}\frac{dm(z)}{(|z|+1)^2\log(|z|+2)}<\infty, 
$$
and hence is thin.
As a result, 
$$
\frac{F_2(z)}{\sigma_0(z)}\int_{\mathbb{C}}\frac{\overline{F_1(\xi)}\sigma_0(\xi)}{z-\xi}\,d\nu(\xi)=o(|z|^{-1}),\qquad |z|\to\infty,\,z\in\CC\setminus\Omega,
$$
for some thin set $\Omega$. 
\end{proof}

\begin{proof}[Proof of Theorem~\ref{codim}] Without loss of generality we can assume that $\Lambda_1$ and $\Lambda_2$ are infinite (otherwise a linear algebra argument shows that the mixed system is complete). Choose two entire functions of finite order $G_1$ 
and $G_2$ with simple zeros, correspondingly, at $\Lambda_1$ and $\Lambda_2$ 
such that $G=G_1G_2$.
We fix four distinct points $\lambda_1, \lambda_2,\lambda_3, \lambda_4$ such that $\lambda_1,\lambda_2\in\Lambda_1$, $\lambda_3,\lambda_4\in\Lambda_2$. 

\subsection*{Step 1.} Let us start with two vectors $H_1,H_2\in\mathcal F$ such that    
$H_2\perp\{k_\lambda\}_{\lambda\in\Lambda_2}$, $H_1\perp\{g_\lambda\}_{\lambda\in\Lambda_1}$. 

Then $H_2=G_2S_2$ for some entire function $S_2$ (independent of $H_1$).
Denote by $\Sigma$ the zero set of $S_2$. If $\Sigma$ is finite, then $S_2=Pe^{\varphi}$ 
for an entire function $\varphi$
and a polynomial $P$. In this case we can multiply $G_2$ by $e^{\varphi}$ and $G_1$ and $S_2$ by $e^{-\varphi}$.
So in the case when $\Sigma$ is finite we can assume that $S_2$ is a polynomial. 

Applying, if necessary, 
the Weyl translation operator $W_a:F\mapsto F(\cdot+a)e^{-\pi\bar{a}\cdot}$ (see, for instance, \cite{S}), we can guarantee that either 
$\Sigma$ is finite or it contains an infinite subset 
separated from $\mathcal Z$ and that the zeros of $G$ and $H_2$ are disjoint from $\mathcal Z$. 

We have 
\begin{equation}
\label{dop7f}
0=\langle g_\lambda,H_1\rangle_\mathcal{F}=\Bigl\langle \frac{G}{\cdot-\lambda},H_1\Bigr\rangle_\mathcal{F},\qquad \lambda\in\Lambda_1.
\end{equation}
By Lemma \ref{l3}, for $\lambda\in\Lambda_1\setminus\{\lambda_1,\lambda_2\}$ we have 
\begin{multline*}
0=\Bigl\langle \frac{G}{(\cdot-\lambda)(\cdot-\lambda_1)(\cdot-\lambda_2)},H_1\Bigr\rangle_\mathcal{F} 
\\=
\sum_{w\in \mathcal{Z}_0}\frac{b_wG(w)}{\|k_w\|(w-\lambda)(w-\lambda_1)(w-\lambda_2)},
\end{multline*}
where 
$$
b_w=\Bigl\langle \frac{\|k_w\|}{\sigma'_0(w)}\cdot\frac{\sigma_0}{\cdot-w},H_1\Bigr\rangle_\mathcal{F}.
$$

Set
$$
L(z)=\sum_{w\in \mathcal{Z}_0}\frac{b_wG(w)}{\|k_w\|(z-w)(\lambda_1-w)(\lambda_2-w)}.
$$ 
By \eqref{dop7f}, $L(\lambda)=0$, $\lambda\in \Lambda_1\setminus\{\lambda_1,\lambda_2\}$.

Therefore, for some entire function $S_1$ (independent of $H_2$) we have 
\begin{equation}
\sum_{w\in \mathcal{Z}_0}\frac{b_wG(w)}{\|k_w\|(z-w)(\lambda_1-w)(\lambda_2-w)}=\frac{G_1(z)S_1(z)}{\sigma_0(z)(z-\lambda_1)(z-\lambda_2)}.
\label{e1}
\end{equation}
On the other hand, we have the following interpolation formula for 
$H_2/[\sigma_0(\lambda_3-\cdot)(\lambda_4-\cdot)]$: 
\begin{equation}
\label{e2}
\sum_{w\in \mathcal Z_0}\frac{a_w\|k_w\|}{\sigma'_0(w)(z-w)
(\lambda_3-w)(\lambda_4-w)} = \frac{G_2(z)S_2(z)}{\sigma_0(z)(z-\lambda_3)(z-\lambda_4)},
\end{equation}
where $a_w=H_2(w)/\|k_w\|$, $w\in \mathcal Z_0$. 
Indeed, the difference between the two parts of this equality is an entire function tending to zero 
outside any $\varepsilon$-neighborhood of $\mathcal Z_0$. 
Set 
$$
M(z)=\sum_{w\in \mathcal Z_0}\frac{a_w\|k_w\|}{\sigma'_0(w)(z-w)
(\lambda_3-w)(\lambda_4-w)}.
$$
Calculating the residues in \eqref{e1} and \eqref{e2} 
we obtain 
$$
S_1(w)=\frac{b_w\sigma_0'(w)G_2(w)}{\|k_w\|},
\qquad S_2(w)=\frac{a_w\|k_w\|}{G_2(w)},\qquad w\in\mathcal Z_0.
$$
Hence, for every $\mu\in\CC\setminus \mathcal Z_0$, there exists an entire function $T_\mu$ such that
\begin{multline}
\label{Teq}
\frac{S_1(z)S_2(z)}{\sigma_0(z)}=
\sum_{w\in \mathcal{Z}_0}a_wb_w\Bigl[\frac{1}{z-w}+\frac{1}{w-\mu}\Bigr]+T_\mu(z)
\\= \mathcal C_\mu(z) + T_\mu(z),
\end{multline}
with 
$$
{\mathcal C}_\mu(z) = \sum_{w\in \mathcal{Z}_0}a_wb_w\Bigl[\frac{1}{z-w}+\frac{1}{w-\mu}\Bigr].
$$

\subsection*{Step 2.} Next, we show that $T_\mu$ is a polynomial for every $\mu\in\CC\setminus \mathcal Z_0$.  Indeed, we can multiply \eqref{e1} by \eqref{e2} and obtain 
\begin{multline*}
L(z)M(z)=
\frac{G(z)S_1(z)S_2(z)}{\sigma^2_0(z)(z-\lambda_1)(z-\lambda_2)(z-\lambda_3)(z-\lambda_4)}\\
=\frac{G(z)\mathcal C_\mu(z)}{\sigma_0(z)(z-\lambda_1)(z-\lambda_2)(z-\lambda_3)(z-\lambda_4)}\\ +
\frac{G(z)T_\mu(z)}{\sigma_0(z)(z-\lambda_1)(z-\lambda_2)(z-\lambda_3)(z-\lambda_4)}.
\end{multline*}
Since $L(z)$, $M(z)$, and $C_\mu(z)/|z|^2$ are $O(1/|z|)$ as $|z|\to\infty$ while $\dist(z,\mathcal Z)>\varepsilon$, 
it is easy to see that 
$$
\frac{GT_\mu}{(\cdot-\lambda_1)(\cdot-\lambda_2)(\cdot-\lambda_3)(\cdot-\lambda_4)}\in\mathcal{F}.
$$ 
If $T$ has at least $4$ zeros $t_1,\ldots,t_4$, then 
$$
\frac{GT_\mu}{(\cdot-t_1)(\cdot-t_2)(\cdot-t_3)(\cdot-t_4)}\in\mathcal{F}. 
$$
This contradicts to the completeness of the system $\{k_\lambda\}_{\lambda\in\Lambda}$. If $T_\mu$ is not a polynomial and has finite
number of zeros, then $T_\mu-1$ has infinite number of zeros and we arrive again at a contradiction. Therefore, $T_\mu$ is a polynomial. 

\subsection*{Step 3.} Now, we will show that $T_\mu$ is a constant for every $\mu\in\CC\setminus \mathcal Z_0$. Indeed, otherwise, the estimate
$$
\sum_{w\in \mathcal{Z}\setminus\{0\}}a_wb_w\Bigl[\frac{1}{z-w}+\frac{1}{w-\mu}\Bigr]=o(|z|), \qquad |z|\to\infty,\,\dist(z,\mathcal{Z})>\varepsilon,
$$
yields that all zeros of $S_2$ of large modulus are close to $\mathcal{Z}$ and, hence, 
by the above remarks, $S_2$ is a polynomial. Then by \eqref{Teq}, 
$$
|S_1(z)|\gtrsim \frac{|\sigma_0(z)|}{1+|z|^N},\qquad \dist(z,\mathcal{Z})>\varepsilon,
$$ 
for some $N<\infty$. By \eqref{e1} we obtain that $G_1$ is of at most polynomial growth, 
which is impossible since $\Lambda_1$ is infinite.

\subsection*{Step 4.} Thus, for every $\mu\in\CC\setminus \mathcal Z_0$, the function $T_\mu$ is a constant.  
Hence, by Lemma~\ref{lem5}, for every zero $\mu$ of $H_2$ we have 
\begin{gather}
\label{tempor} \frac{S_1(z)S_2(z)}{\sigma_0(z)} =\sum_{w\in \mathcal{Z}_0}a_wb_w\Bigl[\frac{1}{z-w}+\frac{1}{w-\mu}\Bigr]+T_\mu \\
 = \Bigl\langle \frac{H_2}{\cdot-\mu},H_1\Bigr\rangle_\mathcal{F}+T_\mu+\frac{\langle H_2,H_1\rangle_\mathcal{F}}{z}+o(|z|^{-1}),\quad |z|\to\infty,\,z\in\CC\setminus\Omega,\notag
\end{gather}
for some thin set $\Omega$. 

If $S_2$ is a polynomial, and 
$$
\Bigl\langle \frac{H_2}{\cdot-\mu},H_1\Bigr\rangle_\mathcal{F}+T_\mu\not=0
$$
for a zero $\mu$ of $H_2$, then it follows from \eqref{tempor} that
$$
|S_1(z)|\gtrsim\frac{|\sigma_0(z)|}{1+|z|^N}, \qquad z\in\mathbb{C}\setminus\Omega, 
$$
for some thin set $\Omega$. By \eqref{e1} we obtain that $G_1$ 
is of at most polynomial growth on $\mathbb C\setminus\widetilde{\Omega}$ with 
another thin set $\widetilde{\Omega}=\Omega\cup\bigcup_{z\in \mathcal{Z}_0}D(z,1/|z|)$. 
By Lemma~\ref{lemmaLi} we obtain that $G_1$ is a polynomial, which is impossible.

Suppose now that $S_2$ is not a polynomial, and then its zero set, $\Sigma$, is infinite. 
By \eqref{Teq}, for every $\mu\in\Sigma$ we have $T_\mu=0$. 
As a result, the values $\langle H_2/(\cdot-\mu),H_1\rangle_\mathcal{F}$ 
do not depend on $\mu\in\Sigma$. Using that 
$\lim_{\mu\to\infty}\|H_2\slash(\cdot-\mu)\|_\mathcal{F}=0$, we conclude that $\langle H_1, H_2/(\cdot-\mu)\rangle_\mathcal{F}=0$, $\mu\in\Sigma$.

Summing up, we always have 
\begin{equation}\label{fo7}
\frac{S_1(z)S_2(z)}{\sigma_0(z)}=\frac{\langle H_2,H_1\rangle_\mathcal{F}}{z}+o(z^{-1}),\qquad |z|\to\infty,\,z\in\CC\setminus\Omega,
\end{equation}
for some thin set $\Omega$.

\subsection*{\bf Step 5.} Now, suppose that the dimension of the orthogonal complement to the mixed system
$$
\{k_\lambda\}_{\lambda\in\Lambda_2}\cup\{g_\lambda\}_{\lambda\in\Lambda_1}
$$
is at least two. 
Then we can choose two vectors $H,\tilde H\in\mathcal F$ 
such that
\begin{gather*}
H,\tilde H \perp\{g_\lambda\}_{\lambda\in\Lambda_1}\cup 
\{k_\lambda\}_{\lambda\in\Lambda_2},\qquad 
\|H\|=\|\tilde H\|=1,\qquad \langle H,\tilde H\rangle_\mathcal{F}=0.
\end{gather*}

Let $S_1,S_2$ be the $S$-functions corresponding to $H$, and let $\tilde S_1$, $\tilde S_2$ be the $S$-functions corresponding to $\tilde H$. 
Applying the previous argument to the pairs $(H_1,H_2)=(H,H),(\tilde H,\tilde H),
(H,\tilde H), (\tilde H,H)$ we obtain similarly to \eqref{fo7} that 
\begin{align*}
A&=\frac{S_1(z)S_2(z)}{\sigma_0(z)}=\frac{1}{z}+o(|z|^{-1}),\qquad |z|\to\infty,\,z\in\CC\setminus\Omega,\\
B&=\frac{\tilde{S_1}(z)\tilde{S}_2(z)}{\sigma_0(z)}=\frac{1}{z}+o(|z|^{-1}),\qquad |z|\to\infty,\,z\in\CC\setminus\Omega,\\
C&=\frac{S_1(z)\tilde{S}_2(z)}{\sigma_0(z)}=o(|z|^{-1}),\qquad |z|\to\infty,\,z\in\CC\setminus\Omega,\\
D&=\frac{\tilde{S}_1(z)S_2(z)}{\sigma_0(z)}=o(|z|^{-1}),\qquad |z|\to\infty,\,z\in\CC\setminus\Omega,\\
\end{align*}
for some thin set $\Omega$.

The identity $AB=CD$ gives us a contradiction.
\end{proof}


\begin{thebibliography}{BRSHZE}

\bibitem{als} G.~Ascenzi, Yu.~Lyubarskii, K.~Seip, {\it Phase space distribution of Gabor expansions},
Appl. Comput. Harmon. Anal. {\bf 26} (2009) 277--282.

\bibitem{bb} A.~Baranov, Yu.~Belov,
{\it Systems of reproducing kernels and their biorthogonal:
completeness or incompleteness}?, Int.\ Math.\ Res.\ Notices {\bf 22} (2011) 5076--5108.

\bibitem{BBB} A.~Baranov, Yu.~Belov, A.~Borichev, {\it Hereditary 
completeness for systems of
exponentials and reproducing kernels}, Adv. Math. {\bf 235} (2013) 525--554.

\bibitem{BBB2}
A.~Baranov, Yu.~Belov, A.~Borichev,
{\it Spectral synthesis in de Branges spaces},
Geom. Funct. Anal. (GAFA) {\bf 25} (2015) 417--452.

\bibitem{bbb17} A.~Baranov, Yu.~Belov, A.~Borichev, {\it The Young type theorem
in weighted Fock spaces},  Bull. London Math. Soc. {\bf 50} (2018), n.2, 357--363.

\bibitem{b1}
Yu.~Belov, {\it Uniqueness of Gabor series}, Appl. Comput. Harm. Anal. 
{\bf 39} (2015) 545--551.



\bibitem{Gro} K.~Gr\"{o}chenig, {\it Foundations of Time--Frequency Analysis,} 
Birkh\"{a}user, Boston, MA, 2001.

\bibitem{Koo1} P.~Koosis, {\it The Logarithmic Integral, Vol.\ I}, Cambridge 
Univ.\ Press, Cambridge, 1988.



\bibitem{ls} Yu.~Lyubarskii, K.~Seip, {\it Convergence and summability of Gabor 
expansions at the Nyquist density}, J. Fourier Anal. Appl. {\bf 5} (1999) 127--157.

\bibitem{mm} P.~Mattila, M.~S.~Melnikov,
{\it Existence and weak-type inequalities for Cauchy integrals 
of general measures on rectifiable curves and sets}, 
Proc. Amer. Math. Soc. {\bf 120} (1994) 143--149. 

\bibitem{S} K.~Seip, {\it Density theorems for sampling and interpolation in 
the Bargmann--Fock space. I}, J. Reine Angew. Math. {\bf 429} (1992) 91--106.

\bibitem{S2} K.~Seip, R.~Wallst\'en, {\it Density theorems for sampling and interpolation in the Bargmann--Fock space.
II}, J.\ Reine Angew. Math. {\bf 429} (1992) 107--113.

\bibitem{ver} J. Verdera, {\it A weak type inequality for Cauchy transforms 
of finite measures}, Publ. Math. {\bf 36} (1992) 1029--1034.

\bibitem{T} M. Tsuji, {\it Potential theory in modern function theory}, Reprinting of the 1959 original. Chelsea Publishing Co., New York, 1975

\end{thebibliography}
\end{document}